\numberwithin{equation}{section}
\theoremstyle{plain}
\newtheorem{theorem}{Theorem}[section]
\newtheorem{lemma}[theorem]{Lemma}
\newtheorem{proposition}[theorem]{Proposition}
\theoremstyle{definition}
\newtheorem{remark}{Remark}[section]
\newcommand{\id}{\mathrm{id}}
\begin{document}

\begin{center}{\large\textbf{REGULARITY OF THE SEMIGROUP OF TRANSFORMATIONS PRESERVING A LENGTH}}
\end{center}

\vskip 0.5cm
 \begin{center} Worachead Sommanee \end{center} 
\begin{center} Department of Mathematics and Statistics,  Chiang Mai Rajabhat University, \newline Chiang Mai, 50300, Thailand
\end{center}  
\begin{center} E-mail: worachead\_som@cmru.ac.th
\end{center} 

\vskip0.3cm
\begin{abstract}
	Let $X_n = \{1,2,\dots,n\}$ be a finite set $(n\geq 2)$ and $T_n$ the full transformation semigroup on $X_n$. For a positive integer $l\leq n-1$, we define 
	\begin{center}
		$T_n(l) = \{\alpha\in T_n \colon \forall x,y\in X_n,\, |x-y| = l \;\Rightarrow\; |x\alpha - y\alpha| = l\}$	
	\end{center}
	and
	\begin{center}
		$T^*_n(l) = \{\alpha\in T_n \colon \forall x,y\in X_n,\, |x-y| = l \;\Leftrightarrow\; |x\alpha - y\alpha| = l\}$.	
	\end{center}
	Then $T_n(l)$ and $T^*_n(l)$ are subsemigroups of $T_n$. In this paper, we  give a necessary and sufficient condition for $T_n(l)$ to be regular. Moreover, we prove that  $T^*_n(l)$ is a regular semigroup.	
\end{abstract}

\noindent\textbf{Keywords} regularity; transformation; preserve a length.

\noindent\textbf{AMS Subject Classification} 20M20

\section{Introduction and Preliminaries}\label{intro}

Regularity has played an important role in the development of semigroup theory. For a concept of regularity and transformation semigroups,  we refer the reader to \cite{CliffordPreston1961}.

The \emph{full transformation semigroup} is the collection of functions from a set $X$ into $X$ with composition, which is denoted by $T(X)$. It is well-known that $T(X)$ is a regular semigroup with identity element $\id_X$, the identity function on $X$ (see \cite{Doss}). The regularity of various subsemigroups of $T(X)$ have been studied over a long period (see, for example \cite{Chinram, Jitman, Mendes, Palasak, Pookpienlert, Sangkhanan, Srithus, Zhao}).

For $x\in X$ and $\alpha\in T(X)$, the image of $x$ under $\alpha$ is written as $x\alpha$. By the corresponding notation, we will compose functions from left to right: $x(\alpha\beta) = (x\alpha)\beta$.  For a non-empty subset $A$ of $X$, the image of a subset $A$ of $X$ under $\alpha$ is denoted by $A\alpha$. If $A=X$, then $X\alpha$ is the range (image) of $\alpha$. We denote by $x\alpha^{-1}$ the set of all inverse images of $x$ under $\alpha$, that is, $x\alpha^{-1} = \{z\in X \colon z\alpha = x\}$.  Here, $|A|$ denotes the cardinality of a set $A$.

Throughout the paper, we let $X_n = \{1,2,\dots,n\}$ $(n\geq 2)$ and $l$ an integer such that $1\leq l\leq n-1$. We write $T_n$ instead of $T(X_n)$. Define 
\begin{center}
	$
	T_n(l) = \{\alpha\in T_n \colon \forall x,y\in X_n,\, |x-y| = l \;\Rightarrow\; |x\alpha - y\alpha| = l\}
	$
\end{center} 
and 
\begin{center}
	$
	T^*_n(l) = \{\alpha\in T_n \colon \forall x,y\in X_n,\, |x-y| = l \;\Leftrightarrow\; |x\alpha - y\alpha| = l\}
	$
\end{center} 
\vskip0.1cm
\noindent where $|x-y|$ is the \emph{absolute difference} of numbers $x$ and $y$. Obviously, $\id_{X_n}\in T^*_n(l)$ and $T^*_n(l)\subseteq T_n(l)$.  Note that if $\alpha\in T_n(l)$, then we say that $\alpha$ \emph{preserves the length} $l$. Let $\alpha,\beta\in T_n(l)$ and $x,y\in X_n$ such that $|x-y| = l$. Then $|x\alpha - y\alpha| = l$. Since $x\alpha,y\alpha\in X_n$ and $\beta$ preserves the length $l$, we obtain $|(x\alpha)\beta - (y\alpha)\beta| = l$. Hence $\alpha\beta\in T_n(l)$ and so $T_n(l)$ is a subsemigroup of $T_n$. We call $T_n(l)$ the \emph{semigroup of transformations preserving a length} $l$. Similarly, if $\alpha,\beta\in T^*_n(l)$, we can show that
\vskip0.2cm
\begin{center}
	$|x(\alpha\beta) - y(\alpha\beta)| = l \Leftrightarrow |(x\alpha)\beta - (y\alpha)\beta| = l \Leftrightarrow |x\alpha - y\alpha| = l \Leftrightarrow |x - y| = l$.
\end{center}
\vskip0.2cm 
\noindent Hence $\alpha\beta\in T^*_n(l)$ and so $T^*_n(l)$ is a subsemigroup of $T_n$. It is also clear that $T^*_n(l)$ is a subsemigroup of $T_n(l)$. 

Here, in Section \ref{intro}, we present the new transformation semigroups $T_n(l)$ and $T^*_n(l)$. Moreover, we also introduce basic results for the semigroups $T_n(l)$ and $T^*_n(l)$. In Sections \ref{Regularity1} and \ref{Regularity=l}, we give a necessary and sufficient condition for $T_n(l)$ to be regular.  Finally, in Section \ref{Reg_T_star}, we prove that $T^*_n(l)$ is a regular semigroup.

As in Clifford and Preston \cite[page 2]{CliffordPreston1961}, if $a_1,a_2,\dots, a_n\in X_n$ (not necessarily distinct), we shall use the notation
\begin{center}
	$
	\alpha=\begin{pmatrix}
		1 & 2 & \cdots & n\\
		a_1 & a_2 & \cdots & a_n
	\end{pmatrix}$   
\end{center}
\vskip0.1cm
to mean $\alpha\in T_n$ defined by $i\alpha = a_i$ for all $i = 1,2,\dots,n$.

The following remark will be useful in the paper.

\begin{remark}\label{RemK} The following hold:
	\begin{enumerate}
		\item  If $(n\geq 3$ is odd and $l \geq \frac{n+1}{2})$ or $(n \geq 4$ is even and  $l > \frac{n}{2})$, then $n-l < l$. For each $x\in\{1,2,\dots,n-l\}$, there exists unique $y = x+l\in X_n$ such that $|x - y| = l$. On the other hand, for each $x \in\{l+1,l+2,\dots,n\}$, there exists unique $y = x-l\in X_n$ such that $|x - y| = l$. Moreover, $|x-y|\neq l$ for all  $x\in\{(n-l)+1,(n-l)+2,\dots, l\}$ and all $y\in X_n$.  Indeed, 
		for $1\leq k \leq 2l-n$, if $|[(n-l)+k] - y| = l$ for some $y\in X_n$, we get that $[(n-l)+k] - y = l$ or $[(n-l)+k] - y = -l$. In the former, we obtain $n-2l+k = y > 0$, which implies that $k > 2l-n$, a contradiction. In the latter, we have $y = n+k \notin X_n$. 	So in this case, we can write
		\begin{center}
			$X_n = \{1, 2, \dots, n-l, (n-l)+1, \dots, l-1, l, l+1, l+2, \dots, n\}$
		\end{center} or
		\begin{center}
			$X_n = \{1, l+1\}\cup\{2, l+2\}\cup\cdots\cup\{n-l, n\}\cup\{(n-l)+1, (n-l)+2, \dots, l-1, l\}$.
		\end{center}
		
		\item If $(n\geq 3$ is odd and $l < \frac{n+1}{2})$ or $(n \geq 4$ is even and  $l \leq \frac{n}{2})$, then $2l \leq n$.  So, we can write 
		\begin{center}
			$X_n =\{1, 2, \dots, l, l+1, l+2,  \dots, 2l, 2l+1, 2l+2, \dots, 3l, 3l+1, 3l+2,  \dots, n\}$
		\end{center}
		or
	
	\hskip0.6cm 	$
	X_n = \{1,1+l,1+2l,\dots,1+m_1l\}\cup \{2,2+l,2+2l,\dots,2+m_2l\}\cup \cdots\cup$ 
	
	\hskip1.65cm $\{l,2l, 3l,\dots, (1+m_l)l\}	
	$
			
		where $m_i\geq 1$ is the maximum integer such that $i+m_il\leq n$ for all $i = 1,2,\dots,l$. It is clear that for each $x\in X_n$, there exists $y\in X_n$ such that $|x-y|=l$.
	\end{enumerate}
\end{remark}

\begin{proposition}\label{Property1} $T_n(l)$ is never equal to $T_n$.
\end{proposition}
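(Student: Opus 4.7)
The plan is to prove the proposition by producing a single explicit witness, that is, a transformation $\alpha \in T_n$ that fails to preserve the length $l$. Since $T_n(l) \subseteq T_n$ always holds, it suffices to exhibit one element of $T_n \setminus T_n(l)$. The simplest candidates are collapsing maps, because any map that sends two distinct points to the same image immediately destroys some nonzero distance.

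First I would use the standing hypothesis $1 \leq l \leq n-1$ to observe that $1 + l \in X_n$, so that the pair $\{1, 1+l\} \subseteq X_n$ realizes the absolute difference $l$; this is the witness pair the argument needs.

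Next I would define $\alpha \in T_n$ to be the constant map $i\alpha = 1$ for every $i \in X_n$ (equivalently, in the two-row notation,
\[
\alpha = \begin{pmatrix} 1 & 2 & \cdots & n \\ 1 & 1 & \cdots & 1 \end{pmatrix}\!).
\]
Then $|1 - (1+l)| = l$, but $|1\alpha - (1+l)\alpha| = |1 - 1| = 0$, and since $l \geq 1$ we have $0 \neq l$. Hence $\alpha \in T_n$ but $\alpha \notin T_n(l)$, giving $T_n(l) \subsetneq T_n$.

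There is essentially no obstacle: the only things to verify are the elementary bookkeeping facts that $1 + l \leq n$ (which is exactly $l \leq n-1$) and that $l \neq 0$ (which is $l \geq 1$). Both are built into the statement of the problem, so the argument is complete once the constant map is written down.
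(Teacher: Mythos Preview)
Your proof is correct and essentially identical to the paper's own argument: both exhibit the constant map with image $\{1\}$ and use the pair $1,\,l+1$ to witness failure of the length-preserving condition.
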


\begin{proof} Let $\kappa\in T_n$ be the constant function with range $\{1\}$. Then $\kappa\notin T_n(l)$ since $|1-(l+1)| = l$ but $|1\kappa-(l+1)\kappa| = |1-1| = 0 \neq l$. Hence, $T_n(l)\neq T_n$. 
\end{proof} 

%

\begin{proposition}\label{Property2} $T_n(l) = T^*_n(l)$ if and only if $n=2$ or $(n=3$ and $l=1)$.
\end{proposition}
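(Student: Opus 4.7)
The plan is to prove the two implications separately. The sufficiency direction $(\Leftarrow)$ amounts to checking, in each of the two exceptional cases, that every element of $T_n(l)$ automatically satisfies the stronger biconditional defining $T^*_n(l)$. For $n=2$ the unique pair in $X_2$ is $(1,2)$, which already has distance $l=1$, so the implication and the biconditional in the definitions of $T_n(l)$ and $T^*_n(l)$ coincide. For $(n,l)=(3,1)$, I would take any $\alpha\in T_3(1)$, set $b=2\alpha$, and note that $|1\alpha-2\alpha|=|2\alpha-3\alpha|=1$ forces $1\alpha,3\alpha\in\{b-1,b+1\}$; hence $|1\alpha-3\alpha|\in\{0,2\}$, which is exactly what is needed to verify $\alpha\in T^*_3(1)$, since the only pair in $X_3$ with absolute difference different from $1$ is $(1,3)$ with difference $2$.

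For necessity $(\Rightarrow)$ I prove the contrapositive: assuming $n\geq 3$ and $(n,l)\neq(3,1)$, I exhibit a concrete element of $T_n(l)\setminus T^*_n(l)$. The construction splits according to the dichotomy in Remark~\ref{RemK}: either $2l>n$ (Case~A) or $2l\leq n$ (Case~B).

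In Case~A, define $\alpha$ by $x\alpha=1$ if $x\leq l$ and $x\alpha=l+1$ if $x>l$. Every length-$l$ pair has the form $(x,x+l)$ with $x\leq n-l<l$ and $x+l>l$, so $|x\alpha-(x+l)\alpha|=|1-(l+1)|=l$, giving $\alpha\in T_n(l)$. Since $2l\geq n+1$, the ``island'' $\{n-l+1,\dots,l\}$ in Remark~\ref{RemK}(1) is non-empty; taking $x=n-l+1$ and $y=l+1$ gives $|x-y|=2l-n\neq l$ (as $l<n$), while $|x\alpha-y\alpha|=l$, witnessing $\alpha\notin T^*_n(l)$. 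In Case~B, I use the chain decomposition $X_n=\bigcup_{i=1}^{l}C_i$ with $C_i=\{i,i+l,\dots,i+m_il\}$ from Remark~\ref{RemK}(2), and define $\alpha$ to zig-zag along each chain by $(i+jl)\alpha=1$ when $j$ is even and $(i+jl)\alpha=l+1$ when $j$ is odd. Every length-$l$ pair is consecutive inside some chain, so its image is a pair at distance $|1-(l+1)|=l$, giving $\alpha\in T_n(l)$. To see $\alpha\notin T^*_n(l)$ I split on $l$: if $l\geq 2$ then $l+2\leq 2l\leq n$ and the cross-chain pair $(1,l+2)$ (sitting in $C_1,C_2$ at positions $j=0,1$ respectively) has $|1-(l+2)|=l+1\neq l$ with $|1\alpha-(l+2)\alpha|=|1-(l+1)|=l$; if $l=1$ then the hypotheses $(n,l)\neq(3,1)$ and $2l\leq n$ force $n\geq 4$, and the within-chain non-consecutive pair $(1,4)$ satisfies $|1-4|=3\neq 1$ with $|1\alpha-4\alpha|=|1-2|=1$.

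The main technical point to watch is the split inside Case~B between $l=1$ and $l\geq 2$: the cross-chain witness needs at least two chains and breaks when $l=1$, so a within-chain non-consecutive pair is needed in its place, and it is precisely here that the exclusion $(n,l)\neq(3,1)$ enters essentially---it forces $n\geq 4$, which is exactly what the $(1,4)$ witness requires. Everything else reduces to routine verification once the correct $\alpha$ is written down.
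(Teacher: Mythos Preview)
Your proof is correct and follows essentially the same approach as the paper: the same zig-zag map $\alpha$ (sending alternate $l$-blocks to $1$ and $l+1$) is used in both the $2l>n$ and $2l\le n$ regimes, with only cosmetic differences in the choice of witness pairs. Your treatment is slightly more streamlined---you absorb $(n,l)=(3,2)$ into Case~A rather than handling it separately, and your sufficiency argument for $(3,1)$ avoids the explicit enumeration the paper gives---but the underlying construction is identical.
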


\begin{proof} If $n=2$, then $l=1$ and 
	\begin{center} $T_2(1) = \biggl\{ 
		\begin{pmatrix}
			1 & 2\\
			1 & 2 
		\end{pmatrix} , 	\begin{pmatrix}
			1 & 2\\
			2 & 1 
		\end{pmatrix} 
		\biggl\} = T^*_2(1)$.
	\end{center}
	If $n=3$ and $l=1$, then 

\begin{align*}
T_3(1) &= \biggl\{ 
\scriptstyle{\begin{pmatrix}
	1 & 2 & 3\\
	1 & 2 & 1
\end{pmatrix} , 	\begin{pmatrix}
	1 & 2 & 3\\
	1 & 2 & 3
\end{pmatrix} ,
\begin{pmatrix}
	1 & 2 & 3\\
	2 & 1 & 2
\end{pmatrix} , 	\begin{pmatrix}
	1 & 2 & 3\\
	2 & 3 & 2
\end{pmatrix},
\begin{pmatrix}
	1 & 2 & 3\\
	3 & 2 & 1
\end{pmatrix} , 	\begin{pmatrix}
	1 & 2 & 3\\
	3 & 2 & 3
\end{pmatrix}}
\biggl\}\\
&=T^*_3(1).
\end{align*}	
Conversely, assume  $n\geq 4$ or ($n =3$ and $l = 2$). We show $T_n(l) \neq T^*_n(l)$. Let $\alpha = \begin{pmatrix}
		1 & 2 & 3\\
		1 & 1 & 3 
	\end{pmatrix}$. Then $\alpha\in T_3(2)\setminus T^*_3(2)$, since $|2\alpha - 3\alpha| = |1-3| = 2$ but $|2-3| = 1 \neq 2$. Thus, $T_3(2) \neq T^*_3(2)$. Now, suppose that $n\geq 4$.  We consider four cases: (i) $n$ is even and  $l > \frac{n}{2}$; (ii) $n$ is odd and $l \geq \frac{n+1}{2}$; (iii) $n$ is even and $l \leq \frac{n}{2}$; (iv) $n$ is odd and $l < \frac{n+1}{2}$.

	\noindent\textbf{Cases (i) \& (ii)}.  Using  Remark \ref{RemK}\,(1), we define $\alpha\in T_n(l)$ by
	\begin{center}
		$\small{\alpha = \scriptscriptstyle{\left( {\begin{array}{ccccccccccccccccccccc}
					1 & l+1 & 2 & l+2 & \cdots &  n-l & n  & (n-l)+1 & (n-l)+2 & \cdots& l-1 & l\\
					1 & l+1 & 1 & l+1 & \cdots &  1 & l+1  & 1 & 1 & \cdots& 1 & 1
			\end{array} } \right)}}$. 
	\end{center}
	We see that $|l\alpha - (l+1)\alpha| = |1-(l+1)| = l$ while $|l-(l+1)| = 1 \neq l$ (since $l > \frac{n}{2}\geq 2$). Hence $\alpha\notin T^*_n(l)$.
	
	\noindent\textbf{Cases (iii) \& (iv)}.  Using  Remark \ref{RemK}\,(2), we can define $\alpha\in T_n(1)$ by
	\begin{center}
		$\alpha = \left( {\begin{array}{ccccccccccccccccccccc}
				1 & 2 & 3 & 4 & \cdots &  n\\
				1 & 2 & 1 & 2 & \cdots & n\alpha
		\end{array} } \right)$ 
	\end{center}
	where \hskip 1.25cm $n\alpha=\begin{cases}
		1            & \hbox{$\text{if } n \text{ is odd}$},\\
		2            & \hbox{$\text{if } n \text{ is even}$}.
	\end{cases}$
	\vskip0.05cm
	\noindent Since $|1\alpha - 4\alpha| = |1-2| = 1$ but $|1-4| = 3\neq 1$, thus $\alpha\notin T^*_n(l)$. For $l\geq 2$, we define $\alpha\in T_n(l)$ by
	\begin{center}
		$\small{\alpha = \footnotesize{\left( {\begin{array}{cccccccccccccccccccccccc}
					1  & \cdots & l & l+1  &  \cdots & 2l & 2l+1   & \cdots & 3l & 3l+1 & \cdots & 4l & 4l+1 &\cdots & n\\
					1  & \cdots & 1 & l+1  &  \cdots & l+1 & 1  & \cdots & 1 & l+1 &  \cdots & l+1 & 1 & \cdots & n\alpha
			\end{array} } \right)}}$ 
	\end{center}
	where 
	\begin{center}
		$n\alpha=\begin{cases}
			1            & \hbox{$\text{if } n\in\{kl+t\colon k\geq 2 \text{ is even and } 1\leq t \leq l\} $},\\
			l+1            & \hbox{$\text{if } n\in\{kl+t\colon k\geq 3 \text{ is odd and } 1\leq t \leq l\} $}.
		\end{cases}$.
	\end{center}
	It is easy to see that $|2\alpha - (l+1)\alpha| = |1-(l+1)| = l$ such that $|2-(l+1)| = |1-l|\neq l$. Hence $\alpha\notin T^*_n(l)$. Therefore, $T_n(l)\neq T^*_n(l)$.
\end{proof}


\section{Regularity of $T_n(1)$}\label{Regularity1}

We would like to acknowledge the use of GAP \cite{GAP}, a system for
computational discrete algebra with the MONOID package.

\begin{remark}\label{Remm1} Recall that the subsemigroup generated by $A$ denoted by $\langle A\rangle$. Using the commands gap> Semigroup(); and gap> IsRegularSemigroup(); of GAP, we have the following facts.
	
	\noindent  (1) $T_2(1) = \biggl\langle
	\begin{pmatrix}
		1 & 2\\
		2 & 1 
	\end{pmatrix} 
	\biggl\rangle
	$ is regular,
	
	\noindent  (2) $T_3(1) = \biggl\langle
	\begin{pmatrix}
		1 & 2  & 3\\
		2 & 1  & 2
	\end{pmatrix},  
	\begin{pmatrix}
		1 & 2  & 3\\
		3 & 2  & 1
	\end{pmatrix} \biggl\rangle
	$ is regular,

	\noindent  (3) $T_4(1) = \biggl\langle
	\begin{pmatrix}
		1 & 2  & 3 & 4\\
		2 & 3  & 2 & 1
	\end{pmatrix},  
	\begin{pmatrix}
		1 & 2  & 3 & 4\\
		3 & 4  & 3 & 2
	\end{pmatrix},
	\begin{pmatrix}
		1 & 2  & 3 & 4\\
		4 & 3  & 2 & 1
	\end{pmatrix} \biggl\rangle
	$ is regular,
	
	\noindent  (4) $\footnotesize{T_5(1) = \biggl\langle
		\begin{pmatrix}
			1 & 2  & 3 & 4 & 5\\
			2 & 3 & 4 & 5 & 4 
		\end{pmatrix},  
		\begin{pmatrix}
			1 & 2  & 3 & 4 & 5\\
			3 & 2  & 1 & 2 & 3
		\end{pmatrix},
		\begin{pmatrix}
			1 & 2  & 3 & 4 & 5\\
			4 & 3  & 2 & 1 & 2
		\end{pmatrix},
		\begin{pmatrix}
			1 & 2  & 3 & 4 & 5\\
			5 & 4  & 3 & 2 & 1
		\end{pmatrix}
		\biggl\rangle}$
	
	\hskip1.4cm	 is regular.
\end{remark}

\begin{theorem}\label{Regular1} $T_n(1)$ is a regular semigroup if and only if $n\in\{2,3,4,5\}$.
\end{theorem}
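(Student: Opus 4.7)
The converse direction ($\Leftarrow$) of the theorem is already settled by Remark \ref{Remm1}, where each of the cases $n\in\{2,3,4,5\}$ was confirmed regular via GAP. The substantive work is therefore to prove ($\Rightarrow$): for every $n\geq 6$ the semigroup $T_n(1)$ is not regular. Since a single non-regular element suffices to disprove regularity, the plan is to exhibit one explicit $\alpha\in T_n(1)$, uniformly in $n\geq 6$, for which no $\beta\in T_n(1)$ satisfies $\alpha\beta\alpha=\alpha$.

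The guiding observation is that any $\alpha\in T_n(1)$ is a unit-step walk on $X_n$: the images of two consecutive integers must differ by exactly $1$, so once $i\alpha$ and the ``direction'' are chosen, $(i+1)\alpha$ is highly constrained. I would exploit this rigidity by building a ``zigzag'' whose preimage structure cannot be matched inside $T_n(1)$. Concretely, for $n\geq 6$ take
\[
\alpha=\begin{pmatrix}
1 & 2 & 3 & 4 & 5 & 6 & 7 & \cdots & n\\
1 & 2 & 3 & 2 & 3 & 4 & 5 & \cdots & n-2
\end{pmatrix},
\]
defined by $i\alpha=i$ for $i\in\{1,2,3\}$ and $i\alpha=i-2$ for $4\leq i\leq n$. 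A short check of consecutive differences gives $\alpha\in T_n(1)$; its image is $\{1,2,\ldots,n-2\}$ with preimages
\[
1\alpha^{-1}=\{1\},\quad 2\alpha^{-1}=\{2,4\},\quad 3\alpha^{-1}=\{3,5\},\quad k\alpha^{-1}=\{k+2\}\text{ for }4\leq k\leq n-2.
\]

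To finish I would assume, toward a contradiction, that $\alpha\beta\alpha=\alpha$ for some $\beta\in T_n(1)$. Composing on the right by $\alpha$ forces $(i\alpha)\beta\in(i\alpha)\alpha^{-1}$ for every $i\in X_n$; in particular $1\beta=1$, $4\beta=6$, and $2\beta\in\{2,4\}$, $3\beta\in\{3,5\}$. Running these constraints through the unit-step condition $|j\beta-(j+1)\beta|=1$ uniquely pins down $2\beta=2$ (as $0\notin X_n$), hence $3\beta=3$, and hence $4\beta\in\{2,4\}$, contradicting $4\beta=6$.

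The only real obstacle is locating a single family of witnesses that works for every $n\geq 6$; once the construction is in place the non-regularity follows from a completely forced chain of equalities, driven by the unit-step rigidity at the boundary indices $1,2,3,4$. The tail $i\mapsto i-2$ for $i\geq 5$ plays no part in the contradiction and is included only to make $\alpha$ well-defined and length-preserving throughout $X_n$.
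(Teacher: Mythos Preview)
Your argument is correct. The single map $\alpha$ you build (identity on $\{1,2,3\}$, then $i\mapsto i-2$ for $i\geq 4$) lies in $T_n(1)$ for every $n\geq 6$, and the chain $1\beta=1\Rightarrow 2\beta=2\Rightarrow 3\beta=3\Rightarrow 4\beta\in\{2,4\}$ really is forced by the unit-step condition, while the equation $\alpha\beta\alpha=\alpha$ independently forces $4\beta\in 4\alpha^{-1}=\{6\}$ (using only $n\geq 6$ so that $6\in X_n$). There is no gap.

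Your route is genuinely different from, and considerably more economical than, the paper's. The paper splits $n\geq 6$ into four residue classes modulo $4$ and in each case builds a bespoke $\alpha$ that first descends from $n$ down to roughly $n/2$ and then oscillates; the contradiction is obtained near the middle of $X_n$ after tracking several values of $\beta$. Your construction instead plants a short ``hook'' at the left end of $X_n$ and a straight shift on the tail, so the contradiction is reached after only three forced steps and requires no case analysis at all. What the paper's more elaborate maps buy is perhaps a template that generalises to other values of $l$ (their Case~2 constructions in Lemmas~\ref{n_odd_NotReg} and~\ref{n_even_NotReg} have a similar flavour), but for $l=1$ your uniform witness is cleaner and the boundary pinning at $1\beta=1$ is exactly the right idea.
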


\begin{proof} By Remark \ref{Remm1}, we have $T_n(1)$ is a regular semigroup when $n\in\{2,3,4,5\}$. 
	
	Conversely, let $n\geq 6$ and assume that $T_n(1)$ is regular. There are four cases to consider.
	
	\noindent\textbf{Case 1.} Suppose that $n\in\{6,10,14,18,\dots\}$. Then $|\{1, 2, 3, \dots, \frac{n}{2}\}| = |\{n, n-1, \dots,  \break  \frac{n}{2}+1\}| = \frac{n}{2}$ is odd.  Define 
	\vskip0.1cm
	\begin{center}
		$
		\footnotesize{\alpha = \left( {\begin{array}{cccccccccccccc}
					1 & 2 & \cdots & \frac{n}{2}-1 & \frac{n}{2} & \frac{n}{2}+1 & \frac{n}{2}+2 & \frac{n}{2}+3 & \frac{n}{2}+4 & \cdots & n-2  & n-1 & n\\
					n & n-1 & \cdots   & \frac{n}{2}+2 & \frac{n}{2}+1 & \frac{n}{2}+2 & \frac{n}{2}+1 & \frac{n}{2}+2 & \frac{n}{2}+1 & \cdots &  \frac{n}{2}+2   & \frac{n}{2}+1  & \frac{n}{2} 
			\end{array} } \right)} 
		$.	
	\end{center}
	\noindent It is easy to check that $\alpha\in T_n(1)$. Since $T_n(1)$ is regular, there exists $\beta\in T_n(1)$ such that $\alpha = \alpha\beta\alpha$. We have $\frac{n}{2} = n\alpha = (n\alpha\beta)\alpha = (\frac{n}{2}\beta)\alpha$.  Then we obtain $(\frac{n}{2})\beta = n$ and it follows that $(\frac{n}{2}+1)\beta = n-1$.  Since $\frac{n}{2}+2 = (\frac{n}{2}+1)\alpha = ((\frac{n}{2}+1)\alpha\beta)\alpha = ((\frac{n}{2}+2)\beta)\alpha$, we get $(\frac{n}{2}+2)\beta\in\{\frac{n}{2}-1, \frac{n}{2}+1, \frac{n}{2}+3, \dots, n-2\}$. We see that $(\frac{n}{2}+2)\beta \in\{n,n-2\}$, so  $(\frac{n}{2}+2)\beta = n-2$. Next, we consider $\frac{n}{2}+3 = (\frac{n}{2}-2)\alpha = ((\frac{n}{2}-2)\alpha\beta)\alpha = ((\frac{n}{2}+3)\beta)\alpha$. Thus $(\frac{n}{2}+3)\beta = \frac{n}{2}-2$. Since $|(\frac{n}{2}+3) - (\frac{n}{2}+2)| = 1$ but $|(\frac{n}{2}+3)\beta - (\frac{n}{2}+2)\beta| = |(\frac{n}{2}-2) - (n-2)| = \frac{n}{2} \neq 1$, this is a contradiction.
	
	\noindent\textbf{Case 2.} Suppose that $n\in\{8, 12, 16, 20, \dots\}$. We note that  $|\{\frac{n}{2}+1, \frac{n}{2}+2, \dots, \break n-3, n-2, n-1, n\}| = \frac{n}{2}$ is even. Define 
	\begin{center}
		$
		\footnotesize{\alpha = \left( {\begin{array}{ccccccccccccccc}
					1 & 2 & \cdots  & \frac{n}{2}-1 & \frac{n}{2} & \frac{n}{2}+1 & \frac{n}{2}+2  & \cdots & n-3 & n-2 & n-1 & n\\
					n & n-1 & \cdots  & \frac{n}{2}+2 & \frac{n}{2}+1  & \frac{n}{2}+2  & \frac{n}{2}+1 & \cdots & \frac{n}{2}+2  & \frac{n}{2}+1  & \frac{n}{2}  & \frac{n}{2}-1 
			\end{array} } \right)}
		$.	
	\end{center}
	It is obvious that $\alpha\in T_n(1)$. Since $T_n(1)$ is regular, there exists $\beta\in T_n(1)$ such that $\alpha = \alpha\beta\alpha$. Consider $\frac{n}{2} = (n-1)\alpha = ((n-1)\alpha\beta)\alpha = (\frac{n}{2}\beta)\alpha$.  Then we obtain $\frac{n}{2}\beta = n-1$.  Since $\frac{n}{2}+1 = (n-2)\alpha = ((n-2)\alpha\beta)\alpha = ((\frac{n}{2}+1)\beta)\alpha$, we get  $(\frac{n}{2}+1)\beta\in\{\frac{n}{2}, \frac{n}{2}+2, \break \frac{n}{2}+4, \dots,n-4, n-2\}$ and so $(\frac{n}{2}+1)\beta = n-2$. Since $((\frac{n}{2}+2)\beta)\alpha = ((n-3)\alpha)\beta\alpha = (n-3)\alpha = \frac{n}{2}+2$, which implies that $(\frac{n}{2}+2)\beta\in\{\frac{n}{2}-1, \frac{n}{2}+1, \frac{n}{2}+3, \dots, n-3\}$. But $(\frac{n}{2}+2)\beta\in\{n-1, n-3\}$, whence $(\frac{n}{2}+2)\beta =  n-3$. Next, we consider $\frac{n}{2}+3 = (\frac{n}{2}-2)\alpha = ((\frac{n}{2}-2)\alpha\beta)\alpha = ((\frac{n}{2}+3)\beta)\alpha$. Thus $(\frac{n}{2}+3)\beta = \frac{n}{2}-2$. We see that $|(\frac{n}{2}+3) - (\frac{n}{2}+2)| = 1$ but $|(\frac{n}{2}+3)\beta - (\frac{n}{2}+2)\beta| = |(\frac{n}{2}-2) - (n-3)| = \frac{n}{2} -1 \neq 1$. This is a contradiction.
	
	\noindent\textbf{Case 3.} Suppose that $n\in\{7,11,15,19,\dots\}$. 	Then  $|\{2, 3, \dots,\frac{n+1}{2}\}| = |\{n, n-1, \break n-2,  \dots, \frac{n+5}{2}, \frac{n+3}{2}\}|$ and $|\{\frac{n+3}{2}, \frac{n+5}{2}, \dots, n-2, n-1\}| = \frac{n-3}{2}$ is even. Define 		
	\begin{center}
		$
		\footnotesize{\alpha = \left( {\begin{array}{cccccccccccccc}
					1 & 2 & 3 & \cdots & \frac{n-3}{2} & \frac{n-1}{2} & \frac{n+1}{2} & \frac{n+3}{2} & \frac{n+5}{2} & \cdots  & n-2 & n-1 & n\\
					n-1 & n & n-1 & \cdots  & \frac{n+7}{2} & \frac{n+5}{2} & \frac{n+3}{2} & \frac{n+5}{2} & \frac{n+3}{2} & \cdots  & \frac{n+5}{2}  & \frac{n+3}{2}  & \frac{n+1}{2} 
			\end{array} } \right)}
		$.
	\end{center}
	It is easy to see that $\alpha\in T_n(1)$. Since $T_n(1)$ is regular, there exists $\beta\in T_n(1)$ such that $\alpha = \alpha\beta\alpha$. Consider $\frac{n+1}{2} = n\alpha = (n\alpha\beta)\alpha = ((\frac{n+1}{2})\beta)\alpha$.  Then we obtain $(\frac{n+1}{2})\beta = n$ and it follows that $(\frac{n+3}{2})\beta = n-1$.  Since $\frac{n+5}{2} = (\frac{n+3}{2})\alpha = ((\frac{n+3}{2})\alpha\beta)\alpha = ((\frac{n+5}{2})\beta)\alpha$, we get	$(\frac{n+5}{2})\beta\in\{\frac{n-1}{2}, \frac{n+3}{2}, \frac{n+7}{2}, \dots, n-2\}$ and so $(\frac{n+5}{2})\beta = n-2$. Next, we consider $\frac{n+7}{2} = (\frac{n-3}{2})\alpha = ((\frac{n-3}{2})\alpha\beta)\alpha = ((\frac{n+7}{2})\beta)\alpha$. Thus $(\frac{n+7}{2})\beta = \frac{n-3}{2}$. We see that $|\frac{n+7}{2} - \frac{n+5}{2}| = 1$ but $|(\frac{n+7}{2})\beta - (\frac{n+5}{2})\beta| = |\frac{n-3}{2} - (n-2)| = \frac{n-1}{2} \neq 1$. This is a contradiction.

	\noindent\textbf{Case 4.} Suppose that $n\in\{9,13,17,21,\dots\}$. Define 		
	\vskip  0.1cm
	\begin{center}
		$
		\footnotesize{\alpha = \left( {\begin{array}{cccccccccccccc}
					1 & 2 & 3 & \cdots & \frac{n-5}{2} & \frac{n-3}{2} & \frac{n-1}{2} & \frac{n+1}{2} & \frac{n+3}{2} & \cdots  & n-2 & n-1 & n\\
					n & n-1 & n-2 & \cdots  & \frac{n+7}{2} & \frac{n+5}{2} & \frac{n+3}{2} & \frac{n+5}{2} & \frac{n+3}{2} &  \cdots  & \frac{n+5}{2}  & \frac{n+3}{2}  & \frac{n+1}{2} 
			\end{array} } \right)}
		$.
	\end{center}
	We note that $|\{1, 2, \dots,\frac{n-3}{2}, \frac{n-1}{2}\}| = |\{n, n-1,  \dots, \frac{n+5}{2}, \frac{n+3}{2}\}|$ and $|\{\frac{n+1}{2}, \frac{n+3}{2}, \dots, \break n-2, n-1\}| = \frac{n-1}{2}$ is even.  It is clear that $\alpha\in T_n(1)$. Since $T_n(1)$ is regular, there exists $\beta\in T_n(1)$ such that $\alpha = \alpha\beta\alpha$. Consider $\frac{n+1}{2} = n\alpha = (n\alpha\beta)\alpha = ((\frac{n+1}{2})\beta)\alpha$.  Then we obtain $(\frac{n+1}{2})\beta = n$ and it follows that $(\frac{n+3}{2})\beta = n-1$.  Since $\frac{n+5}{2} = (n-2)\alpha = \break  ((n-2)\alpha\beta)\alpha = ((\frac{n+5}{2})\beta)\alpha$, we get	$(\frac{n+5}{2})\beta\in\{\frac{n-3}{2}, \frac{n+1}{2}, \frac{n+5}{2}, \dots, n-2\}$ and so $(\frac{n+5}{2})\beta = n-2$. Next, we consider $\frac{n+7}{2} = (\frac{n-5}{2})\alpha = ((\frac{n-5}{2})\alpha\beta)\alpha = ((\frac{n+7}{2})\beta)\alpha$. Thus $(\frac{n+7}{2})\beta = \frac{n-5}{2}$. Since $|\frac{n+7}{2} - \frac{n+5}{2}| = 1$, this implies that                    $|(\frac{n+7}{2})\beta - (\frac{n+5}{2})\beta| = |\frac{n-5}{2} - (n-2)| = \frac{n+1}{2} \neq 1$. This is a contradiction. Therefore 	$T_n(1)$ is not regular. 
\end{proof}


\section{Regularity of $T_n(l)$ for $2\leq l\leq n-1$}\label{Regularity=l}

For the case $l=n-1$, we have 
\begin{align*}
	T_n(n-1)    &=\{\alpha\in T_n \colon 1\alpha = 1 \text{ and } n\alpha = n\}\cup\{\alpha\in T_n \colon 1\alpha = n \text{ and } n\alpha = 1\}\\
	&= PG_{\{1,n\}}(X_n) \;\text{ as defined in } \cite{Laysirikul}.
\end{align*}
It follows from \cite[Theorem 2.2]{Laysirikul} that $T_n(n-1)$ is a regular semigroup.

From now on, we let $n\geq 4$ and $2\leq l \leq n-2$. Note that $2 \leq n-l$ and $l+2\leq n$.

\begin{lemma}\label{n_odd_NotReg} If $n\geq 5$ is odd, then  $T_n(l)$ is not regular.
\end{lemma}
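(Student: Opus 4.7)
The plan is to adapt the strategy of Theorem \ref{Regular1} to general length $l$. For each admissible pair $(n,l)$ I would exhibit an explicit $\alpha \in T_n(l)$ and show that no $\beta \in T_n(l)$ can satisfy $\alpha = \alpha\beta\alpha$. Recall that this equation forces $y\beta \in y\alpha^{-1}$ for every $y \in X_n\alpha$, so the goal is to engineer $\alpha$ so that the prescribed family of preimage-sets $\{y\alpha^{-1} : y \in X_n\alpha\}$ admits no selection function that preserves the length $l$.

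Guided by Remark \ref{RemK}, I would split the argument into the two regimes $l \leq (n-1)/2$ and $l \geq (n+1)/2$, since the structure of the length-$l$ classes is qualitatively different. In the first regime, $X_n$ decomposes as a union of chains $\{i, i+l, i+2l, \dots\}$ for $i = 1, \dots, l$, at least one of which has length at least three. I would pick such a long chain and define $\alpha$ on it by first reflecting its initial half bijectively onto its terminal half through a natural length-preserving bijection and then oscillating between two fixed values $a, a+l$ along the remaining entries; on the other chains $\alpha$ would act in a simple length-preserving, near-constant manner, directly mimicking the $l=1$ witnesses of Theorem \ref{Regular1}. In the second regime, by Remark \ref{RemK}(1), $X_n$ is a disjoint union of 2-element blocks $\{i, i+l\}$ together with an isolated ``frozen'' middle segment; there I would collapse the middle block to a single point of $X_n$ (the length-$l$ condition is vacuous on that segment) and apply an analogous reflection-plus-oscillation construction on the 2-blocks.

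Assuming a quasi-inverse $\beta \in T_n(l)$ exists, I would determine $y\beta$ for each $y \in X_n\alpha$ from $y\alpha\beta\alpha = y\alpha$: the reflective part of $\alpha$ has singleton preimages and pins $\beta$ down uniquely there, while the oscillating part leaves $\beta$ free to choose from a small preimage set at each of two image points which are at distance $l$. Comparing the resulting forced values of $\beta$ at two such image points whose preimage sets contain no pair at distance $l$ yields the required contradiction. The main obstacle will be ensuring both that $\alpha$ itself lies in $T_n(l)$ for every $(n,l)$ of this type and that the contradiction materialises in every subcase; as in Theorem \ref{Regular1}, I anticipate needing further case-splitting by the parity of $l$ or by $n \bmod 4$ to handle the boundary behaviour when the reflected chain has odd versus even length.
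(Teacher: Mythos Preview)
Your overall strategy and the case split $l \ge (n+1)/2$ versus $l \le (n-1)/2$ match the paper exactly, and for the small-$l$ regime your plan is close in spirit to the paper's Case~2: the paper also builds $\alpha$ chain by chain and derives a contradiction from the forced values of $\beta$ on the image, though its specific witness works on two chains simultaneously (the residue classes of $1$ and $2$ modulo $l$) and finishes with a parity argument rather than a single-chain reflect-then-oscillate construction.

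The gap is in the large-$l$ regime. Your proposed ``reflection-plus-oscillation on the 2-blocks'' does not transfer from Theorem~\ref{Regular1}: there the relevant chain has length $n$, so reflecting an initial segment and then oscillating on the tail produces a unique-preimage value adjacent (at distance $l$) to a many-preimage value, and that mismatch is what kills $\beta$. Here every non-middle chain is a single pair $\{i,\,l+i\}$, so there is no room on a block to carry both a bijective ``reflection'' part and an ``oscillation'' part; a map that merely permutes or collapses 2-blocks and sends the middle to a point will typically be regular in $T_n(l)$. The paper's Case~1 sidesteps this with a much simpler witness that exploits the frozen middle directly: send $1,\dots,l-1$ to $1$, send $l$ (which lies in the middle segment $\{n-l+1,\dots,l\}$) to $2$, and send $l+1,\dots,n$ to $l+1$. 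Then $2$ has the unique preimage $l$, forcing $2\beta=l$; since $|2-(l+2)|=l$ this gives $|l-(l+2)\beta|=l$, so $(l+2)\beta\in\{0,2l\}$, both outside $X_n$ because $2l>n$. The idea you are missing in this regime is to place a middle element as the \emph{sole} preimage of some image value, rather than trying to manufacture the contradiction on the 2-blocks themselves.
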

\begin{proof} Let $n\geq 5$ be an odd integer. We consider two cases: $l \geq \frac{n+1}{2}$ or $l < \frac{n+1}{2}$. 
	
	\noindent\textbf{Case 1.} Assume that $l \geq \frac{n+1}{2}$.  Using Remark \ref{RemK}\,(1), we can define
	\begin{center}
		$\small{\alpha =  \left({\begin{array}{ccccccccccccccccccccc}
				1 & 2 & \cdots & n-l & (n-l)+1 &  \cdots & l-1  & l & l+1 & l+2 & \cdots & n\\
				1 & 1 & \cdots & 1 & 1 &  \cdots & 1 & 2 & l+1 & l+1 & \cdots & l+1
		\end{array} } \right)}$.
	\end{center} 
	It is easy to verify that $\alpha\in T_n(l)$. If $T_n(l)$ is regular, then there exists $\beta\in T_n(l)$ such that $\alpha = \alpha\beta\alpha$. It follows that $2 = l\alpha = (l\alpha\beta)\alpha = (2\beta)\alpha$ and so $2\beta = l$. Since $|2-(l+2)| = l$, this implies that $|l-(l+2)\beta| = |2\beta-(l+2)\beta| = l$. Hence $(l+2)\beta = 0$ or $(l+2)\beta = 2l > n$, this is a contradiction. Therefore, $T_n(l)$ is not regular.
	
	\noindent\textbf{Case 2.} Assume that $l < \frac{n+1}{2}$. Then $l \leq \frac{n+1}{2} -1 = \frac{n-1}{2}$, whence $2l+1\leq n$. Using Remark \ref{RemK}\,(2), we can define $\alpha\in T_n(l)$ by
	\begin{center}
		$
		\small{\alpha = \footnotesize{\left( {\begin{array}{ccccccccccccccccccccc}
					1 & 2 & 3 & \cdots & l+1 & l+2 & l+3 &  \cdots & kl+1 & kl+2  & kl+3 & \cdots & n\\
					l+1 & l+1 & 3 & \cdots & 2l+1 & 1 & l+3 &  \cdots & u_k & v_k & kl+3 &  \cdots & n\alpha
			\end{array} } \right)}} 
		$
	\end{center}
	where for $k\geq 2$,
	\begin{center}
		$
		u_k = (kl+1)\alpha=\begin{cases}
			(k+1)l+1            & \hbox{$\text{if } (k+1)l+1\leq n$},\\
			(k-1)l+1             & \hbox{$\text{if } (k+1)l+1>n$},
		\end{cases}
		$
	\end{center}
	\begin{center}
		$
		v_k = (kl+2)\alpha=\begin{cases}
			1              & \hbox{$\text{if } k \text{ is odd}$},\\
			l+1            & \hbox{$\text{if } k \text{ is even}$},
		\end{cases}
		$ \:\:and \:\: 	$n\alpha=\begin{cases}
			n-l          & \hbox{$\text{if } r= 0$},\\
			1            & \hbox{$\text{if } r = 1$},\\
			n            & \hbox{$\text{if } 2\leq r \leq l-1$}
		\end{cases}$
	\end{center}
	when $r$ is the remainder of a division of $n-1$ by $l$.  Note that if $n\alpha = 1$ (that is, $r=1$), then $n = kl+2$ for some a positive odd integer  $k$. Suppose that there is an element $\beta\in T_n(l)$ such that $\alpha = \alpha\beta\alpha$. Consider $1 = (l+2)\alpha = ((l+2)\alpha)\beta\alpha = (1\beta)\alpha$ and $2l+1 = (l+1)\alpha = ((l+1)\alpha)\beta\alpha = ((2l+1)\beta)\alpha$. By the definition of $\alpha$, we obtain $1\beta = kl+2$ for some $k\in\{1,3,5,7,\dots\}$ and $(2l+1)\beta\in\{l+1,3l+1\}$. Let $(l+1)\beta = z\in X_n$.  Assume that $(2l+1)\beta = l+1$. Since $|(l+1) - 1| = l$ and $|(2l+1) - (l+1)| = l$, whence $|(l+1)\beta - 1\beta| = l$ and $|(2l+1)\beta - (l+1)\beta| = l$. That is, $|z -  (kl+2)| = l$ and $|l+1 - z| = l$ where $k\geq 1$ is odd. Thus there are four possible cases to consider: 
	\begin{enumerate}[(i)]
		\item $z -  (kl+2) = l$ and $l+1 - z = l$, 
		
		\item $z -  (kl+2) = l$ and $l+1 - z = -l$, 
		
		\item  $z -  (kl+2) = -l$ and $l+1 - z = l$,

		\item $z -  (kl+2) = -l$ and $l+1 - z = -l$. 
	\end{enumerate}

	\noindent For the cases (i) and (iv), we get that $2z = (k+1)l+3$, this is a contradiction since $2z$ is even while $(k+1)l+3$ is odd. For the cases (ii) and (iii), we obtain $(k-1)l = -1$, this is a contradiction since $(k-1)l$ is  even. And if $(2l+1)\beta = 3l+1$, then a contradiction is obtained using an analogous argument. Therefore, $\alpha$ is not a regular element of $T_n(l)$.  
\end{proof}

\begin{lemma}\label{n_even_NotReg} If $n\geq 6$ is even such that $l\neq \frac{n}{2}$, then  $T_n(l)$ is not regular.
\end{lemma}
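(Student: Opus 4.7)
The plan is to follow the template of Lemma~\ref{n_odd_NotReg}: split according to whether $l>n/2$ or $l<n/2$ (the case $l=n/2$ is excluded by hypothesis), and in each case exhibit an explicit $\alpha\in T_n(l)$ for which no $\beta\in T_n(l)$ can satisfy $\alpha=\alpha\beta\alpha$.

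For the case $l>n/2$, since $n$ is even we have $l\geq n/2+1$, hence $2l\geq n+2>n$, and the middle region $\{n-l+1,\ldots,l\}$ from Remark~\ref{RemK}(1) has cardinality $2l-n\geq 2$. I would reuse essentially the same $\alpha$ as in Case~1 of Lemma~\ref{n_odd_NotReg}: send every element of $\{1,\ldots,n-l\}\cup\{(n-l)+1,\ldots,l-1\}$ to $1$, send $l$ to $2$, and send every element of $\{l+1,\ldots,n\}$ to $l+1$. Every distance-$l$ pair has the form $(i,i+l)$ with $i\leq n-l$ and maps to $(1,l+1)$, so $\alpha\in T_n(l)$; since $l$ lies in the middle region it has no distance-$l$ partner, and moreover $\alpha^{-1}(2)=\{l\}$. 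Assuming $\alpha=\alpha\beta\alpha$, the identity $2=l\alpha=(2\beta)\alpha$ forces $2\beta=l$, and then $|2-(l+2)|=l$ forces $(l+2)\beta\in\{0,2l\}$; since $0\notin X_n$ and $2l>n$, both are impossible, a contradiction.

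For the case $l<n/2$ we have $2l\leq n-2$, and Remark~\ref{RemK}(2) partitions $X_n$ into $l$ chains $\{i,i+l,\ldots,i+m_il\}$. I would import the construction from Case~2 of Lemma~\ref{n_odd_NotReg}, defining $\alpha$ so that most chains fold alternately onto the pair $\{1,l+1\}$ (with the initial chain using $\{1,l+1,2l+1\}$) and carefully specifying $n\alpha$ in terms of the residue $r$ of $n-1$ modulo $l$. Verifying $\alpha\in T_n(l)$ then reduces to checking images of consecutive chain-elements, which is direct. For any $\beta\in T_n(l)$ with $\alpha=\alpha\beta\alpha$, tracing $1\beta$ and $(2l+1)\beta$ through the preimages $\alpha^{-1}(1)$ and $\alpha^{-1}(2l+1)$ gives $1\beta=kl+2$ for some odd $k$ and $(2l+1)\beta\in\{l+1,3l+1\}$. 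Writing $z=(l+1)\beta$, the distance-$l$ conditions on $\beta$ produce four sign cases that reduce, exactly as in the odd-$n$ lemma, to the parity equations $2z=(k+1)l+3$ (right side odd) or $(k-1)l=-1$ (left side even), both impossible.

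The main obstacle will be adjusting $n\alpha$ in the second case so that the construction remains in $T_n(l)$ and still forces $1\beta$ to have the form $kl+2$ with $k$ odd, despite the changed parity of $n$. Concretely, the subcases determined by $r=(n-1)\bmod l$ may need different definitions of $n\alpha$ than in the odd-$n$ proof, and the final chain, being of opposite parity to the odd case, must be compensated for in the image assignment. Once the construction is correctly tuned, however, the closing parity argument transfers verbatim, making Case~2 essentially a careful re-bookkeeping rather than a fundamentally new calculation.
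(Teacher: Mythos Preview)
Your proposal is correct and matches the paper's approach exactly: reuse Case~1 of Lemma~\ref{n_odd_NotReg} verbatim when $l>n/2$, and import the Case~2 construction when $l<n/2$ with only the definition of $n\alpha$ adjusted for parity. The paper makes precisely the adjustment you anticipate, splitting the subcase $r=1$ further by the parity of the quotient $q$ (setting $n\alpha=1$ if $q$ is odd and $n\alpha=l+1$ if $q$ is even) so that every preimage of $1$ under $\alpha$ retains the form $kl+2$ with $k$ odd, after which the closing parity argument carries over unchanged.
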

\begin{proof} Assume that $n\geq 6$ is even such that $l\neq \frac{n}{2}$. If  $l > \frac{n}{2}$, we let $\alpha$ be as defined in \textbf{Case 1} of Lemma \ref{n_odd_NotReg}. By using the same proof, we obtain $\alpha$ is not a regular element in $T_n(l)$. Next, suppose that $l < \frac{n}{2}$. Then $2l+1 \leq n$.  Let $u_k$ and $v_k$ be as defined in \textbf{Case 2} of Lemma \ref{n_odd_NotReg}. Define  $\alpha\in T_n(l)$ by
	\begin{center}
		$\small{\alpha = \footnotesize{\left( {\begin{array}{ccccccccccccccccccccc}
					1 & 2 & 3 & \cdots & l+1 & l+2 & l+3 &  \cdots & kl+1 & kl+2  & kl+3 & \cdots & n\\
					l+1 & l+1 & 3 & \cdots & 2l+1 & 1 & l+3 &  \cdots & u_k & v_k & kl+3 &  \cdots & n\alpha
			\end{array} } \right) }}
		$
		
	\end{center}
	where  
	\begin{center}
		$n\alpha=\begin{cases}
			n-l          & \hbox{$\text{if } r= 0$},\\
			1            & \hbox{$\text{if } q \text{ is odd and } r = 1$},\\
			l+1            & \hbox{$\text{if } q \text{ is even and } r = 1$},\\
			n            & \hbox{$\text{if } 2\leq r \leq l-1$}
		\end{cases}$
	\end{center}
	such that $q$ is the quotient and $r$ is the remainder of a division of $n-1$ by $l$. Note that if $n\alpha = 1$ (that is, $q$ is odd and $r = 1$), then $n = ql + 2$ where $q$ is odd. By the same proof as given for \textbf{Case 2} of Lemma \ref{n_odd_NotReg}, we obtain $\alpha$ is not a regular element in $T_n(l)$.
\end{proof}

\begin{remark}\label{Rem2} If $n$ is even and $l = \frac{n}{2}$, then $n=2l$ and $n-l = l$. We note that $x+l\notin X_n$ for all $x\in\{l+1,l+2,\dots,n\}$. Of importance is the fact that for each $x\in\{1,2,\dots,l\}$, there exists unique $y = x+l\in X_n$ such that $|x - y| = l$. And, for each $x \in\{l+1,l+2,\dots,n\}$, there exists unique $y = x-l\in X_n$ such that $|x - y| = l$. Moreover, for any $\alpha\in T_n(\frac{n}{2})$ we can write
	\begin{center}
		$		\alpha = \left( {\begin{array}{ccccccccccccccccccccc}
				1 & l+1 & 2 &  l+2 & \cdots & l & 2l  \\
				1\alpha & (l+1)\alpha & 2\alpha &  (l+2)\alpha & \cdots & l\alpha & (2l)\alpha 
		\end{array} } \right) 
		$
	\end{center}
	where for each $1\leq x\leq l$, 
	\begin{center}		
		$(x\alpha, (l+x)\alpha) \in \{(i, l+i)\colon 1\leq i\leq l\} \cup \{(l+i, i)\colon 1\leq i\leq l\}$.
	\end{center}
\end{remark}

\begin{theorem}\label{Regular4} Let $n\geq 4$ and $2\leq l \leq n-2$. Then
	$T_n(l)$ is regular if and only if $n$ is even and $l=\frac{n}{2}$.
\end{theorem}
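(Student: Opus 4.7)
The plan is to split the theorem into its two implications and leverage the earlier lemmas for the forward direction. For ($\Rightarrow$), suppose $T_n(l)$ is regular with $n \geq 4$ and $2 \leq l \leq n-2$; I want to conclude that $n$ is even and $l = n/2$. If $n$ is odd then $n \geq 5$, and Lemma \ref{n_odd_NotReg} contradicts regularity. If $n$ is even and $l \neq n/2$, then necessarily $n \geq 6$ (the case $n=4$ forces $l = 2 = n/2$ via $2 \leq l \leq n-2 = 2$, so is vacuous), and Lemma \ref{n_even_NotReg} yields a contradiction. This exhausts every case.

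For the substantive direction ($\Leftarrow$), I would fix $n = 2l$ with $l \geq 2$, take an arbitrary $\alpha \in T_{2l}(l)$, and construct a witness $\beta \in T_{2l}(l)$ satisfying $\alpha\beta\alpha = \alpha$. The key structural input is Remark \ref{Rem2}: the pairs $B_i := \{i, l+i\}$ ($1 \leq i \leq l$) are precisely the length-$l$ related pairs in $X_{2l}$, and they partition $X_{2l}$. Consequently every $\alpha \in T_{2l}(l)$ sends each $B_i$ onto some single $B_j$, either identically or with a swap. Thus $\alpha$ is fully encoded by a transformation $\phi \in T(\{1,\dots,l\})$ determined by $B_i\alpha = B_{\phi(i)}$, together with a sign function $\epsilon \colon \{1,\dots,l\} \to \{+,-\}$ recording whether $i\alpha = \phi(i), (l+i)\alpha = \phi(i)+l$ (sign $+$) or the opposite assignment (sign $-$).

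To construct $\beta$, for each $j \in \phi(\{1,\dots,l\})$ I would choose a preimage $x_j \in \phi^{-1}(j)$ and then define $\{j\beta, (j+l)\beta\} = \{x_j, x_j+l\}$, picking the matching so that $j\beta\alpha = j$ and $(j+l)\beta\alpha = j+l$; concretely, $j\beta = x_j$ and $(j+l)\beta = x_j + l$ when $\epsilon(x_j) = +$, and the reverse when $\epsilon(x_j) = -$. For indices $j$ outside $\phi(\{1,\dots,l\})$ the values of $\beta$ are irrelevant to $\alpha\beta\alpha$, so I would simply put $j\beta = 1$ and $(j+l)\beta = l+1$. Every block $\{j, j+l\}$ is then sent by $\beta$ to some $B_i$, and so Remark \ref{Rem2} gives $\beta \in T_{2l}(l)$ automatically. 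To verify $\alpha\beta\alpha = \alpha$, it suffices to show $(z\alpha)\beta\alpha = z\alpha$ for every $z \in X_{2l}$, which splits into the four sign combinations determined by whether $z$ equals $i$ or $l+i$ (for $z \in B_i$) and whether $\epsilon(x_{\phi(i)}) = \pm$; the definition of $\beta$ is calibrated precisely so that in each combination the two block swaps cancel and the value $z\alpha$ is recovered. The main obstacle is nothing deeper than this sign bookkeeping; once the partition structure at $l = n/2$ is exploited (which is exactly the feature that failed in the other length regimes), both $\beta \in T_{2l}(l)$ and the regularity identity follow directly.
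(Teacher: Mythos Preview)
Your proposal is correct and follows essentially the same approach as the paper: the forward direction invokes Lemmas~\ref{n_odd_NotReg} and~\ref{n_even_NotReg} exactly as the paper does, and for the converse both arguments exploit Remark~\ref{Rem2} to see that $X_{2l}$ decomposes into the pairs $\{i,l+i\}$ which $\alpha$ maps bijectively onto pairs, and then build $\beta$ as a block-compatible section of $\alpha$. The only cosmetic differences are that the paper defines $\beta$ as the identity on $X_n\setminus X_n\alpha$ rather than sending those pairs to $\{1,l+1\}$, and that you make the wreath-product-style bookkeeping $(\phi,\epsilon)$ explicit where the paper simply picks compatible preimages $d_u,d_v$ directly.
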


\begin{proof} It follows from Lemmas \ref{n_odd_NotReg},  \ref{n_even_NotReg} that $T_n(l)$ is not regular when $n$ is odd or $l\neq \frac{n}{2}$. Conversely, assume that $n$ is even and $l=\frac{n}{2}$. We prove $T_n(l)$ is regular. Let $\alpha$ be any element in $T_n(l)$ and $u\in X_n\alpha$. By Remark \ref{Rem2}, there exists unique $v\in X_n\alpha$ such that $|u-v| = l$. In fact, $(u, v) \in \{(i, l+i), (l+i, i) \colon 1\leq i\leq l\}$.  We choose and fix $d_u\in u\alpha^{-1}$ and $d_v\in v\alpha^{-1}$ such that $|d_u - d_v| = l$. Define 	\begin{center}
		$\beta = \left( {\begin{array}{ccccccccccccccccccccc}
				u   &  v    & z \\
				d_u &  d_v  & z
		\end{array} } \right)$
	\end{center}
	where $z\in X_n\setminus X_n\alpha$.	Then $\beta$ is well-defined and belongs to $T_n(l)$. Let $x\in X_n$. Thus $x\alpha\in X_n\alpha$ and $(d_{x\alpha})\alpha = x\alpha$. Accordingly,  $((x\alpha)\beta)\alpha =  (d_{x\alpha})\alpha = x\alpha$, that is,  $\alpha\beta\alpha = \alpha$. Therefore  $\alpha$ is regular.
\end{proof}


\section{Regularity of $T^*_n(l)$} \label{Reg_T_star}

In this section, we prove that $T^*_n(l)$ is a regular semigroup. 

For $n=2$, we have $l=1$. It is easy to see that
$T^*_2(1) = \biggl\{
\begin{pmatrix}
	1 & 2\\
	1 & 2 
\end{pmatrix}, \begin{pmatrix}
	1 & 2\\
	2 & 1 
\end{pmatrix}
\biggl\}$ is  regular.  Now, we suppose that $n\geq 3$.

\begin{lemma}\label{RemK*1} Assume that $(n\geq 3$ is an odd integer and $l \geq \frac{n+1}{2})$ or $(n\geq 4$ is an even integer and $l > \frac{n}{2})$. Let $\alpha\in T^*_n(l)$. Then
	\begin{center}
		$
		\small{\alpha = \left( {\begin{array}{ccccccccccccccccccccc}
				1 & l+1 & 2 &  l+2 & \cdots & n-l & n & (n-l)+1 &  \cdots &  l  \\
				1\alpha & (l+1)\alpha & 2\alpha &  (l+2)\alpha & \cdots & (n-l)\alpha & n\alpha & ((n-l)+1)\alpha  & \cdots &  l\alpha
		\end{array} } \right)}
		$	
	\end{center}
	where $\{(n-l)+1, (n-l)+2,\dots,l\}\alpha\subseteq \{(n-l)+1, (n-l)+2,\dots,l\}$, and for each $1\leq x\leq n-l$, 
	\begin{center}
		$(x\alpha, (l+x)\alpha) \in \{(i, l+i)\colon 1\leq i\leq n-l\} \cup \{(l+i, i)\colon 1\leq i\leq n-l\}$
	\end{center}
	such that $\{x\alpha, (l+x)\alpha\}\cap \{y\alpha, (l+y)\alpha\} = \emptyset$ for all $x\neq y\in\{1,2,\dots, n-l\}$. 
\end{lemma}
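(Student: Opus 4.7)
The hypothesis places us in the scenario of Remark~\ref{RemK}\,(1): the only two-element subsets of $X_n$ at distance $l$ are $\{i,l+i\}$ with $1\leq i\leq n-l$, and every element of $M:=\{(n-l)+1,\ldots,l\}$ has no partner at distance $l$ inside $X_n$. My plan is to use the biconditional in the definition of $T^*_n(l)$ to transport this structure through $\alpha$, first to identify the images of the ``paired'' part of $X_n$ and then to corner the images of $M$.

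First I would verify the pair statement. For each $1\leq x\leq n-l$ we have $|x-(l+x)|=l$, so the biconditional gives $|x\alpha-(l+x)\alpha|=l$; since the only distance-$l$ two-element subsets of $X_n$ are $\{i,l+i\}$, this forces $(x\alpha,(l+x)\alpha)\in\{(i,l+i),(l+i,i):1\leq i\leq n-l\}$. In particular $x\alpha$ and $(l+x)\alpha$ both lie in $\{1,\ldots,n-l\}\cup\{l+1,\ldots,n\}$.

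Next I would establish disjointness of these pairs. Fix $x\neq y$ in $\{1,\ldots,n-l\}$ and suppose $\{x\alpha,(l+x)\alpha\}\cap\{y\alpha,(l+y)\alpha\}\neq\emptyset$. Because both are distance-$l$ pairs anchored in the region where the distance-$l$ partner is unique, a single shared element forces the unordered pairs to coincide, giving two subcases. If $x\alpha=y\alpha$, then uniqueness of the distance-$l$ partner yields $(l+x)\alpha=(l+y)\alpha$; computing $|x-(l+y)|=l+y-x$ (valid since $x\leq n-l<l+1\leq l+y$), one sees this equals $l$ iff $y=x$, so $|x-(l+y)|\neq l$, while $|x\alpha-(l+y)\alpha|=|x\alpha-(l+x)\alpha|=l$, contradicting $\alpha\in T^*_n(l)$. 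If instead $x\alpha=(l+y)\alpha$, then $(l+x)\alpha=y\alpha$, and one uses $|x-y|\leq n-l-1<l$ against $|x\alpha-y\alpha|=l$ in the same way.

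The pair statement combined with disjointness shows that the $2(n-l)$ image values $x\alpha,(l+x)\alpha$, $1\leq x\leq n-l$, are distinct and exhaust $\{1,\ldots,n-l\}\cup\{l+1,\ldots,n\}$. For the final assertion, suppose $x\in M$ satisfied $x\alpha\notin M$. Then $x\alpha$ has a unique partner $w\in X_n$ with $|x\alpha-w|=l$, and by the exhaustion just obtained there exists $y'\in\{1,\ldots,n-l\}\cup\{l+1,\ldots,n\}$ with $y'\alpha=w$. Remark~\ref{RemK}\,(1) gives $|x-y'|\neq l$ (since $x\in M$ has no distance-$l$ partner), while $|x\alpha-y'\alpha|=l$, contradicting the biconditional. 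Hence $M\alpha\subseteq M$, completing the three claims.

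I expect the main obstacle to be purely bookkeeping in the disjointness step: one must match each of the possible value-coincidences among $\{x\alpha,(l+x)\alpha,y\alpha,(l+y)\alpha\}$ with a domain pair whose distance is not $l$, but everything reduces to the single hypothesis inequality $n-l<l$, which bounds both the ``straight'' distances $|x-y|$ and the ``cross'' distance $|x-(l+y)|$ away from $l$.
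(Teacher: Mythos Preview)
Your proposal is correct and follows essentially the same route as the paper: use Remark~\ref{RemK}\,(1) to pin down the pair structure of $(x\alpha,(l+x)\alpha)$, argue disjointness of the image pairs via the biconditional in $T^*_n(l)$, deduce that $\alpha$ permutes $\{1,\ldots,n-l\}\cup\{l+1,\ldots,n\}$, and then force $M\alpha\subseteq M$. The only cosmetic difference is the packaging of the disjointness case split---the paper checks the three coincidences $x\alpha=y\alpha$, $x\alpha=(l+y)\alpha$, $(l+x)\alpha=(l+y)\alpha$ separately, whereas you first observe that uniqueness of the distance-$l$ partner collapses any single coincidence to equality of the unordered pairs and then handle two representative cases; both are valid and rest on the same inequality $n-l<l$.
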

\begin{proof} By Remark \ref{RemK}\,(1) and $\alpha$ preserves the length $l$, we can write
	\begin{center}
		$\small{\alpha = \left( {\begin{array}{ccccccccccccccccccccc}
				1 & l+1 & 2 &  l+2 & \cdots & n-l & n & (n-l)+1 &  \cdots &  l  \\
				1\alpha & (l+1)\alpha & 2\alpha &  (l+2)\alpha & \cdots & (n-l)\alpha & n\alpha & ((n-l)+1)\alpha  & \cdots &  l\alpha
		\end{array} } \right)}$
	\end{center}
	where for each $1\leq x\leq n-l$, 
	\begin{center}
		$(x\alpha, (l+x)\alpha) \in \{(i, l+i)\colon 1\leq i\leq n-l\} \cup \{(l+i, i)\colon 1\leq i\leq n-l\}$.
	\end{center}
	Suppose that there exist $x,y\in\{1,2,\dots, n-l\}$ such that $x\neq y$ but $\{x\alpha, (l+x)\alpha\}\cap \{y\alpha, (l+y)\alpha\} \neq \emptyset$. There are three possible cases to consider.
	
	\noindent\textbf{Case 1.}  $x\alpha = y\alpha$. Since $|(l+x) - x| = l$, it follows that $l = |(l+x)\alpha - x\alpha| = |(l+x)\alpha - y\alpha|$. Hence $|(l+x) - y| = l$. Then by Remark \ref{RemK}\,(1), we get $x = y$, a contradiction. 
	
	\noindent\textbf{Case 2.}  $x\alpha = (l+y)\alpha$. Since $|(l+y) - y| = l$, this implies that \[l = |(l+y)\alpha - y\alpha| = |x\alpha - y\alpha|.
	\]
	Thus $|x - y| = l$, which is impossible because  $|x-y|\leq (n-l)-1 < l -1 < l$.

	\noindent\textbf{Case 3.}  $(l+x)\alpha = (l+y)\alpha$. Since $|(l+y) - y| = l$, $l = |(l+y)\alpha - y\alpha| = |(l+x)\alpha - y\alpha|$. Hence $|(l+x) - y| = l$. Then by Remark \ref{RemK}\,(1), we get $x = y$, a contradiction. 
	
	\noindent Therefore $\{x\alpha, (l+x)\alpha\}\cap \{y\alpha, (l+y)\alpha\} = \emptyset$ for all $x\neq y\in\{1, \dots, n-l\}$ and so $\{1,\dots,n-l,l+1,l+2,\dots,n\}\alpha = \{1,2, \dots,n-l,l+1,l+2,\dots,n\}$. If there is an element $x\in \{(n-l)+1, (n-l)+2,\dots,l\}$ such that $x\alpha \in \{1,2,\dots, n-l,l+1, l+2,\dots,n\}$, then there exists $y\in \{1,2,\dots,n-l,l+1,l+2,\dots,n\}$ such that $|x\alpha - y\alpha| = l$. Thus $|x-y| = l$, contrary to Remark \ref{RemK}\,(1). Therefore  $\{(n-l)+1, (n-l)+2,\dots,l\}\alpha\subseteq \{(n-l)+1, (n-l)+2,\dots,l\}$. 
\end{proof}

\begin{theorem}\label{Regular*2} If $(n\geq 3$ is an odd integer and $l \geq \frac{n+1}{2})$ or $(n\geq 4$ is an even integer and $l > \frac{n}{2})$, then $T^*_n(l)$ is a regular semigroup.
\end{theorem}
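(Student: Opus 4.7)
My plan is to use Lemma \ref{RemK*1} to split $X_n$ into a paired part $P = \{1,\ldots,n-l\}\cup\{l+1,\ldots,n\}$ (carrying the $n-l$ pairs $\{i,l+i\}$) and a middle block $M = \{(n-l)+1,\ldots,l\}$, and then build a quasi-inverse $\beta$ for an arbitrary $\alpha \in T^*_n(l)$ block by block. The lemma tells us that any such $\alpha$ restricts to a pair-permuting bijection on $P$ and to a (not necessarily injective) self-map of $M$, which is exactly the structural information I need.

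Given $\alpha\in T^*_n(l)$, I would set $\beta|_P = (\alpha|_P)^{-1}$, which is again a pair-permuting bijection of $P$. On $M$, for each $u\in M\alpha$ I fix a preimage $d_u\in\alpha^{-1}(u)\cap M$ (Lemma \ref{RemK*1} guarantees $M\alpha\subseteq M$) and set $u\beta = d_u$; on the remaining points $u\in M\setminus M\alpha$ I set $u\beta = u$. So defined, $\beta$ maps $P$ bijectively onto $P$ in a pair-respecting way and sends $M$ into $M$, so it already has the shape required by Lemma \ref{RemK*1}.

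To finish I must verify $\beta\in T^*_n(l)$ and then $\alpha\beta\alpha = \alpha$. The key observation for the first point is that $|x-y|=l$ can be realised in $X_n$ only between paired elements of $P$: within $P_1=\{1,\ldots,n-l\}$, within $P_2=\{l+1,\ldots,n\}$, within $M$, and across $P_i\leftrightarrow M$, straightforward arithmetic gives all distances at most $l-1$, while across $P_1\leftrightarrow P_2$ distance $l$ occurs exactly on the pairs $\{i,l+i\}$. Therefore $|x\beta-y\beta|=l$ forces $x\beta,y\beta\in P$, which in turn forces $x,y\in P$ (since $\beta(M)\subseteq M$), after which the pair-respecting bijection $\beta|_P$ yields $|x-y|=l$; the reverse implication is symmetric. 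Regularity then reduces to a two-case check: for $x\in P$, $(x\alpha)\beta\alpha = x\alpha$ because $\beta|_P$ inverts $\alpha|_P$; for $x\in M$, $(x\alpha)\beta\alpha = d_{x\alpha}\alpha = x\alpha$ by the choice of $d_{x\alpha}$. The only genuinely delicate step in the whole argument is the biconditional for $\beta$, but once the arithmetic locating distance $l$ inside $P$ is nailed down, it collapses to the bijectivity and pair-preservation of $\beta|_P$.
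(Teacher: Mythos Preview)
Your proposal is correct and follows essentially the same route as the paper's proof: both use Lemma~\ref{RemK*1} to split $X_n$ into the paired block $P$ and the middle block $M$, define $\beta$ as the inverse of $\alpha$ on $P$, a choice of preimage on $M\alpha$, and the identity on $M\setminus M\alpha$, and then verify $\alpha\beta\alpha=\alpha$ by the same two-case check. In fact you supply more detail than the paper does on the point where the paper simply writes ``It is easy to verify that $\beta\in T^*_n(l)$'': your arithmetic localisation of all distance-$l$ pairs inside $P$, together with $\beta(M)\subseteq M$ and the pair-permuting bijectivity of $\beta|_P$, cleanly handles the biconditional.
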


\begin{proof} Assume that $(n\geq 3$ is an odd integer and $l \geq \frac{n+1}{2})$ or $(n\geq 4$ is an even integer and $l > \frac{n}{2})$. Let $\alpha$ be any element in $T^*_n(l)$. Then by Lemma \ref{RemK*1}, we can write 
	\begin{center}
		$\small{\alpha = \left( {\begin{array}{ccccccccccccccccccccc}
				1 & l+1 & 2 &  l+2 & \cdots & n-l & n & (n-l)+1 &  \cdots &  l  \\
				1\alpha & (l+1)\alpha & 2\alpha &  (l+2)\alpha & \cdots & (n-l)\alpha & n\alpha & ((n-l)+1)\alpha  & \cdots &  l\alpha
		\end{array} } \right)}
		$
	\end{center}
	where $\{(n-l)+1, (n-l)+2,\dots,l\}\alpha\subseteq \{(n-l)+1, (n-l)+2,\dots,l\}$, and for each $1\leq x\leq n-l$, 
	\begin{center}
		$(x\alpha, (l+x)\alpha) \in \{(i, l+i)\colon 1\leq i\leq n-l\} \cup \{(l+i, i)\colon 1\leq i\leq n-l\}$
	\end{center}
	such that $\{x\alpha, (l+x)\alpha\}\cap \{y\alpha, (l+y)\alpha\} = \emptyset$ for all $x\neq y\in\{1,\dots, n-l\}$. For $z\alpha\in\{(n-l)+1, (n-l)+2,\dots,l\}\alpha$, let  $d_{z\alpha}\in(z\alpha)\alpha^{-1}\subseteq \{(n-l)+1,  (n-l)+2,\dots,l\}$. Define
	\begin{center}
		$\beta = \left( {\begin{array}{ccccccccccccccccccccc}
				1\alpha & (l+1)\alpha & 2\alpha &  (l+2)\alpha & \cdots & (n-l)\alpha & n\alpha & z\alpha & w\\
				1 & l+1 & 2&  l+2 & \cdots & n-l & n & d_{z\alpha} & w
		\end{array} } \right)$
	\end{center}
	where $w\in \{(n-l)+1, (n-l)+2,\dots,l\}\setminus X_n\alpha$. It is easy to verify that $\beta\in T^*_n(l)$. Let $a\in X_n$. If $a\in \{1,2,\dots,n-l,l+1,l+2,\dots,n\}$, then $a(\alpha\beta\alpha) = ((a\alpha)\beta)\alpha = a\alpha$. If $a\in \{(n-l)+1, (n-l)+2,\dots,l\}$, then $a\alpha\in \{(n-l)+1, (n-l)+2,\dots,l\}\alpha$ and so $a(\alpha\beta\alpha) = ((a\alpha)\beta)\alpha = (d_{a\alpha})\alpha = a\alpha$. Thus $\alpha\beta\alpha = \alpha$. Therefore $\alpha$ is regular.
\end{proof}

From now on, we assume that $(n\geq 3$ is an odd integer and $l < \frac{n+1}{2})$ or $(n\geq 4$ is an even integer and $l\leq \frac{n}{2})$. To prove $T^*_n(l)$ is regular, we need the following seven lemmas.

\begin{lemma}\label{T^*l} Let $\alpha\in T^*_n(l)$. If $x,y\in X_n$ such that $x<y$ and $x\alpha = y\alpha$, then $y = x+2l$.
\end{lemma}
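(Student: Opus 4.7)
The plan is to exploit the biconditional defining $T^*_n(l)$ by tracking, for each element $a\in X_n$, its ``$l$-neighborhood''
\[
N(a)\;:=\;\{z\in X_n\colon |a-z|=l\}.
\]
The first step is to show that if $x\alpha=y\alpha$, then $N(x)=N(y)$. Indeed, for $z\in N(x)$ we have $|x-z|=l$, so $|x\alpha-z\alpha|=l$ by the forward implication in $T^*_n(l)$; since $x\alpha=y\alpha$ this gives $|y\alpha-z\alpha|=l$, and then the reverse implication yields $|y-z|=l$, i.e.\ $z\in N(y)$. Symmetry gives the reverse inclusion.

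The second step is a direct description of $N(a)$ under our standing hypothesis $2l\leq n$ (from Remark \ref{RemK}\,(2)):
\begin{itemize}
\item if $1\leq a\leq l$, then $a-l\notin X_n$ and $a+l\leq 2l\leq n$, so $N(a)=\{a+l\}$;
\item if $n-l+1\leq a\leq n$, then $a+l\notin X_n$ and $a-l\geq 1$, so $N(a)=\{a-l\}$;
\item if $l+1\leq a\leq n-l$ (a range that is empty when $2l=n$), then $N(a)=\{a-l,a+l\}$, which has two elements.
\end{itemize}

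The third step is a case analysis on where $x$ sits, using the equality $N(x)=N(y)$ combined with $x<y$. If $l+1\leq x\leq n-l$, then $N(y)$ contains both $x-l$ and $x+l$; the condition $|y-(x-l)|=l$ forces $y\in\{x,x-2l\}$ and the condition $|y-(x+l)|=l$ forces $y\in\{x,x+2l\}$, whose intersection is $\{x\}$, contradicting $y>x$. If $x\geq n-l+1$, then $N(y)=\{x-l\}$ forces $y\in\{x-2l,x\}$, again contradicting $y>x$. The only surviving case is $1\leq x\leq l$, where $N(x)=\{x+l\}$, hence $N(y)=\{x+l\}$, which forces $y\in\{x,x+2l\}$; since $y>x$, we conclude $y=x+2l$ as required.

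No step looks truly delicate; the only mild obstacle is to make sure the forward and reverse implications in the definition of $T^*_n(l)$ are both invoked when deriving $N(x)=N(y)$, and to handle the edge case $2l=n$ (where the middle range is empty) alongside the generic case $2l<n$. Everything else is bookkeeping on $X_n$.
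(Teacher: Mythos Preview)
Your proof is correct and rests on the same idea as the paper's: both invoke the forward and backward implications of the $T^*_n(l)$ biconditional to show that any $l$-neighbour of $x$ is an $l$-neighbour of $y$, and then deduce $y=x+2l$ from $y>x$. The paper just picks a single element $x\pm l\in X_n$ and chases it directly, while you package the same step as the set equality $N(x)=N(y)$ followed by a case split on the position of $x$; the arguments are otherwise equivalent.
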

\begin{proof} Assume $x,y\in X_n$ such that $x<y$ and $x\alpha = y\alpha$. By  Remark \ref{RemK}\,(2), there exists $x+l\in X_n$ or $x-l\in X_n$. Since $|x-(x\pm l)| = l$, we obtain $|x\alpha-(x\pm l)\alpha| = l$. It follows that $(x\pm l)\alpha = x\alpha\pm l$. Using our assumption, we have
	\begin{center} $
		|(x\alpha\pm l)-x\alpha| = l \Rightarrow |(x\pm l)\alpha-y\alpha|  = l \Rightarrow |(x\pm l)-y|  = l \Rightarrow y= x \text{ or }  y = x\pm 2l $.
	\end{center}
	If $y = x$ or $y = x-2l$, then $y\leq x$, a contradiction. Thus $y = x+2l$.
\end{proof}

For $i\in\{1,2,\dots,l\}$, let $A_i = \{i,i+l,i+2l,\dots,i+m_il\}$ when $m_i$ is the maximum positive integer such that $i+m_il\leq n$. By  Remark \ref{RemK}\,(2), we can write
\begin{center}
	$X_n = \bigcup\limits_{i=1}^l A_i$		
\end{center}
where $A_i\cap  A_j=\emptyset$ for all $i\neq j\in\{1,2,\dots,l\}$. Moreover, if $x\in A_i$ and $y\in A_j$ for some $i\neq j\in\{1,2,\dots,l\}$, then $|x-y|\neq l$.  Note that $m_l \leq m_{l-1} \leq \cdots \leq m_2 \leq m_1$ and $|A_i| = m_i+1$ for all $1\leq i\leq l$.

\begin{lemma}\label{Cases22}  Let  $\alpha\in T_n^*(l)$ and $i\in\{1,2,\dots,l\}$. If $x\in A_i$ and $x\alpha\in A_j$ for some $j\in\{1,2,\dots,l\}$, then $A_i\alpha\subseteq A_j$.
\end{lemma}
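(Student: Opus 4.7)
The plan is to fix an arbitrary $y \in A_i$ and show that $y\alpha \in A_j$, by moving from $x$ to $y$ one $l$-step at a time inside $A_i$ and tracking what $\alpha$ does at each step. Since every element of $A_i$ has the form $i+kl$ with $0\le k\le m_i$, I would write $x = i+k_0 l$ and $y = i+kl$, and induct on $|k-k_0|$.

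The base case $k=k_0$ is just the hypothesis $x\alpha\in A_j$. For the inductive step, suppose I have already shown $y'\alpha \in A_j$ for some $y' = i+k'l \in A_i$, and I want to move to a neighbor $y'' = y'\pm l \in A_i$. Since $|y''-y'|=l$ and $\alpha\in T_n^*(l)$, I get $|y''\alpha - y'\alpha|=l$, so $y''\alpha = y'\alpha \pm l$. Writing $y'\alpha = j + k''l$ (which is legitimate because $y'\alpha \in A_j$), this gives $y''\alpha = j + (k''\pm 1)l$. Crucially $y''\in X_n$ forces $y''\alpha \in X_n$, and any element of $X_n$ of the form $j+(\text{integer})l$ automatically lies in $A_j$ by the definition of $A_j$. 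Hence $y''\alpha \in A_j$, closing the induction.

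Since $A_i$ is linearly ordered by $l$-steps, any $y\in A_i$ can be reached from $x$ by a finite chain of $\pm l$-moves that stays in $A_i$, so the induction delivers $y\alpha \in A_j$ for every $y \in A_i$, i.e.\ $A_i\alpha\subseteq A_j$.

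There is really no hard step here; the content is the $T_n^*(l)$-axiom combined with the algebraic observation that $A_j$ is exactly the intersection of $X_n$ with the arithmetic progression $j + l\mathbb{Z}$, so $A_j$ is closed under the operation ``add or subtract $l$, provided the result lies in $X_n$.'' The only place to be a little careful is at the endpoints $i$ and $i+m_i l$ of $A_i$, where one of the two moves $\pm l$ leaves $X_n$; but this does not affect the argument because I only ever apply the $\pm l$ move in the direction that keeps me inside $A_i$, so the corresponding image remains in $X_n$ and therefore in $A_j$.
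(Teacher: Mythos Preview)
Your proof is correct and follows essentially the same idea as the paper's: both hinge on the observation that $A_j$ is precisely the set of elements of $X_n$ congruent to $j$ modulo $l$, so it is closed under $\pm l$ within $X_n$, and hence $\alpha$ carries an $l$-step inside $A_i$ to an $l$-step inside $A_j$. The only cosmetic difference is that the paper argues by contradiction (taking the first element of $A_i$ beyond $x$ whose image leaves $A_j$), whereas you run a direct induction on the $l$-distance from $x$; these are dual formulations of the same walk-along-$A_i$ argument.
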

\begin{proof} Let $x\in A_i$ be such that $x\alpha\in A_j$ for some $j\in\{1,2,\dots,l\}$. Suppose that $A_i\alpha\nsubseteq A_j$. Then there exits $y\in A_i$ such that $y\alpha\notin A_j$. If $y>x$, we let $u$ is the first element of $A_i$ such that $u>x$ and $u\alpha\notin A_j$. It follows that $u\leq y, u-l\in A_i$ and  $(u-l)\alpha\in A_j$. Since $|(u-l)\alpha -u\alpha| \neq l$, which implies that $|(u-l)-u| \neq l$, a contradiction. If $y<x$, we let $v$ is the last element of $A_i$ such that $v<x$ and $v\alpha\notin A_j$. It follows that $y\leq v, v+l\in A_i$ and  $(v+l)\alpha\in A_j$. Since $|v\alpha -(v+l)\alpha| \neq l$, this implies that $|v-(v+l)| \neq l$, a contradiction. 	Therefore  $A_i\alpha\subseteq A_j$.
\end{proof}

\begin{lemma}\label{Cases2}  Let  $\alpha\in T_n^*(l)$ and $i,j\in\{1,2,\dots,l\}$ such that $x\in A_i$ and $y\in A_j$. If $i\neq j$, then $x\alpha\neq y\alpha$.
\end{lemma}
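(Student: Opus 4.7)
The plan is to argue by contradiction, leveraging Lemma \ref{T^*l} together with the observation that the sets $A_1, A_2, \dots, A_l$ are precisely the arithmetic progressions of common difference $l$ in $X_n$, so membership in an $A_i$ is governed by the residue of an element modulo $l$ (up to indexing convention).

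Suppose, for contradiction, that there exist $x \in A_i$ and $y \in A_j$ with $i \neq j$ such that $x\alpha = y\alpha$. Since the $A_i$'s partition $X_n$ with $A_i \cap A_j = \emptyset$, we have $x \neq y$, and by symmetry I may assume $x < y$. Applying Lemma \ref{T^*l}, I obtain $y = x + 2l$.

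Next, I would use the explicit description of the blocks $A_i$. Writing $x = i + sl$ for some $0 \leq s \leq m_i$, the equality $y = x + 2l = i + (s+2)l$ shows that $y$ differs from $i$ by a multiple of $l$. Since $y \in X_n$, this forces $y \in A_i$. But $A_i \cap A_j = \emptyset$ and $y \in A_j$, giving the desired contradiction. Hence $x\alpha \neq y\alpha$ whenever $x$ and $y$ lie in distinct blocks.

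There is no serious obstacle here; the only thing to double check is the straightforward arithmetic that $x + 2l$, lying in $X_n$ and congruent to $x$ modulo $l$, must land in the same block $A_i$ as $x$. This follows immediately from the definition $A_i = \{i, i+l, \dots, i+m_i l\}$ and the maximality of $m_i$ (which guarantees $i + (s+2)l \leq n$ once we know $y \in X_n$, so $s + 2 \leq m_i$).
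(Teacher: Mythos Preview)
Your proof is correct and follows essentially the same route as the paper's: assume $x\alpha=y\alpha$ with $x<y$, invoke Lemma~\ref{T^*l} to get $y=x+2l$, then write $x=i+sl$ and conclude $y=i+(s+2)l\in A_i$, contradicting $y\in A_j$ with $j\neq i$. The only cosmetic difference is that the paper omits the explicit check that $s+2\le m_i$, which you spell out.
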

\begin{proof} Assume that  $i\neq j$. Thus  $A_i\cap A_j = \emptyset$. This implies that $y\notin A_i$ and $x\neq y$.  We assume that $x<y$ and $x\alpha = y\alpha$. Let $x = i+kl$ for some $k\in\{0,1,\dots,m_i\}$.   By Lemma \ref{T^*l}, we get 
	\begin{center}
		$i+(k+2)l = (i+kl)+2l = x+2l = y \leq n$.
	\end{center}	
	Hence $y = i+(k+2)l\in A_i$, a contradiction. Therefore $x\alpha\neq y\alpha$. 
\end{proof}

The next remark follows directly from  Lemma \ref{Cases2}.

\begin{remark}\label{RRRem5} 
	Let $\alpha\in T^*_n(l)$ and $x\in X_n$. If $x\in A_i\alpha$ for some $i\in\{1,2,\dots,l\}$, then $x\notin A_j\alpha$ for all $j\in \{1,2,\dots,l\}, j\neq i$.
\end{remark}

\begin{lemma}\label{case2}  Let $i\in\{1,2,\dots,l\}$ and $\alpha\in T_n^*(l)$. If $m_i\geq 3$, then either $i\alpha< (i+l)\alpha< (i+2l)\alpha< \cdots < (i+m_il)\alpha$ or $i\alpha > (i+l)\alpha > (i+2l)\alpha > \cdots > (i+m_il)\alpha$. 
\end{lemma}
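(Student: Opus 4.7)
The plan is to forbid any ``sign change'' in the sequence of differences $x_{k+1}\alpha-x_k\alpha$, where $x_k=i+kl$ for $0\leq k\leq m_i$. The hypothesis $\alpha\in T_n^*(l)$ gives both implications in the length-preserving condition, and the second (contrapositive) direction is the key extra tool compared to working in $T_n(l)$.

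First I would set up the signs. Since $|x_k-x_{k+1}|=l$, the forward implication yields $|x_k\alpha-x_{k+1}\alpha|=l$, so $x_{k+1}\alpha-x_k\alpha\in\{+l,-l\}$; call this sign $\epsilon_k\in\{+1,-1\}$. Monotonicity of the sequence $x_0\alpha,x_1\alpha,\dots,x_{m_i}\alpha$ is equivalent to $\epsilon_0=\epsilon_1=\cdots=\epsilon_{m_i-1}$, so I argue by contradiction and suppose that $\epsilon_k\neq\epsilon_{k+1}$ for some $k$ with $0\leq k\leq m_i-2$. Without loss of generality (the opposite configuration is symmetric), assume $\epsilon_k=+1$ and $\epsilon_{k+1}=-1$, which forces
\begin{center}
$x_{k+1}\alpha=x_k\alpha+l$ and $x_{k+2}\alpha=x_{k+1}\alpha-l=x_k\alpha.$
\end{center}

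Next I exploit $m_i\geq 3$: the sequence has at least four terms, so at least one of $x_{k-1}$ or $x_{k+3}$ lies in $A_i$. In the first subcase $k\geq 1$, I consider $x_{k-1}\alpha$; by the forward implication applied to $|x_{k-1}-x_k|=l$, we have $x_{k-1}\alpha=x_k\alpha\pm l$. Since $x_{k+2}\alpha=x_k\alpha$, in either choice of sign one gets $|x_{k-1}\alpha-x_{k+2}\alpha|=l$. In the second subcase $k+3\leq m_i$, I consider $x_{k+3}\alpha=x_{k+2}\alpha\pm l=x_k\alpha\pm l$, so $|x_k\alpha-x_{k+3}\alpha|=l$. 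Now comes the decisive move: the \emph{reverse} implication in the definition of $T_n^*(l)$ demands that these image-distances of $l$ be matched by argument-distances of $l$. But $|x_{k-1}-x_{k+2}|=3l$ and $|x_k-x_{k+3}|=3l$, and $3l=l$ is impossible since $l\geq 1$. Each subcase therefore contradicts $\alpha\in T_n^*(l)$.

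Hence no sign change occurs, all $\epsilon_k$ are equal, and the image sequence is either strictly increasing (all $\epsilon_k=+1$) or strictly decreasing (all $\epsilon_k=-1$), as claimed.

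The only delicate point I anticipate is the bookkeeping at the endpoints: if $k=0$ one cannot use $x_{k-1}$, and if $k=m_i-2$ one cannot use $x_{k+3}$, so one has to verify that the hypothesis $m_i\geq 3$ prevents both obstructions from occurring simultaneously (which would force $m_i=2$). Apart from this boundary check, the argument is a direct application of the biconditional that distinguishes $T_n^*(l)$ from $T_n(l)$.
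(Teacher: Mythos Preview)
Your proof is correct and rests on the same key observation as the paper's: a ``fold'' in the image sequence produces two arguments at distance $3l$ whose images differ by $l$, contradicting the reverse implication in the definition of $T_n^*(l)$. The paper reaches this by first enumerating all possibilities for the quadruple $(i\alpha,(i+l)\alpha,(i+2l)\alpha,(i+3l)\alpha)$ and then running an induction on subsequent terms, whereas you localize directly at an arbitrary sign-change index $k$ and use whichever neighbour $x_{k-1}$ or $x_{k+3}$ is available; your boundary analysis (that $k=0$ and $k=m_i-2$ cannot both hold when $m_i\geq 3$) is exactly right, and the resulting argument is a little more streamlined than the paper's case-by-case treatment.
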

\begin{proof} Suppose that $m_i\geq 3$. That is, $i,i+l,i+2l,i+3l\in X_n$. We see that \break $(i\alpha, (i+l)\alpha, (i+2l)\alpha, (i+3l)\alpha)\in \{(i\alpha, i\alpha+l, i\alpha+2l, i\alpha+3l), (i\alpha, i\alpha+l,   i\alpha+2l, \break i\alpha+l), (i\alpha, i\alpha+l, i\alpha, i\alpha+l), (i\alpha, i\alpha+l, i\alpha,  i\alpha-l), (i\alpha, i\alpha-l, i\alpha,  i\alpha+l),(i\alpha, i\alpha-l, i\alpha, \break i\alpha-l), (i\alpha, i\alpha-l, i\alpha-2l, i\alpha-l), (i\alpha, i\alpha-l, i\alpha-2l, i\alpha-3l)\}$.  If $(i+3l)\alpha\notin  \{i\alpha+3l, i\alpha-3l\}$, then $(i+3l)\alpha\in\{i\alpha+l, i\alpha-l\}$ and so  $|i\alpha - (i+3l)\alpha| = |i\alpha - (i\alpha\pm l)| = l$ but $|i-(i+3l)| = 3l\neq l$, a contradiction. Hence $(i+3l)\alpha\in \{i\alpha+3l, i\alpha-3l\}$. It follows that $(i\alpha, (i+l)\alpha, (i+2l)\alpha,  (i+3l)\alpha) \in \{ (i\alpha, i\alpha+l, i\alpha+2l, i\alpha+3l), (i\alpha, i\alpha-l,    i\alpha-2l, i\alpha-3l)\}$.  	For the case $(i\alpha, (i+l)\alpha, (i+2l)\alpha, (i+3l)\alpha) = (i\alpha, i\alpha+l, i\alpha+2l, i\alpha+3l)$. It is clear that either $(i+4l)\alpha = i\alpha+2l$ or $(i+4l)\alpha = i\alpha+4l$. If $(i+4l)\alpha = i\alpha+2l$, then $|(i+l)\alpha - (i+4l)\alpha| = |(i\alpha+l) - (i\alpha+2l)| = l$ but $|(i+l) - (i+4l)| = 3l\neq l$, a contradiction. Thus $(i+4l)\alpha = i\alpha+4l$. We finally obtain that $(i+kl)\alpha = i\alpha+kl$ for all $0\leq k\leq m_i$. Therefore
	\begin{center}
		$i\alpha< (i+l)\alpha< (i+2l)\alpha< \cdots < (i+m_il)\alpha$.
	\end{center}
	For the case $(i\alpha, (i+l)\alpha, (i+2l)\alpha, (i+3l)\alpha) =  (i\alpha, i\alpha-l, i\alpha-2l, i\alpha-3l)\}$. It is obvious that either $(i+4l)\alpha = i\alpha-2l$ or $(i+4l)\alpha = i\alpha-4l$. If $(i+4l)\alpha = i\alpha-2l$, then $|(i+l)\alpha - (i+4l)\alpha| = |(i\alpha-l) - (i\alpha-2l)| = l$ while $|(i+l) - (i+4l)| = 3l\neq l$, a contradiction. Thus $(i+4l)\alpha = i\alpha-4l$. Similarly, we obtain  $(i+kl)\alpha = i\alpha-kl$ for all $0\leq k\leq m_i$. Therefore
	\begin{center}
		$i\alpha > (i+l)\alpha > (i+2l)\alpha > \cdots > (i+m_il)\alpha$.
	\end{center}
\end{proof}

Lemma \ref{case2} yields the following remark directly.

\begin{remark}\label{RRem2} 
	$|A_i\alpha| = |A_i|$ for all $\alpha\in T^*_n(l)$ and all $m_i\geq 3$.
\end{remark}

\begin{lemma}\label{Cases222}  Let  $\alpha\in T_n^*(l)$ and $i,j\in\{1,2,\dots,l\}$ such that $m_j\geq 3$. If $m_i < m_j$, then $x\notin  A_j\alpha$ for all $x\in A_i$.
\end{lemma}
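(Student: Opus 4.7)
The plan is to argue by contradiction, exploiting the cardinality information on $A_j\alpha$ provided by the monotonicity lemma (Lemma \ref{case2}) together with the block-stability lemma (Lemma \ref{Cases22}).

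Suppose toward a contradiction that some $x\in A_i$ lies in $A_j\alpha$. Then $x = y\alpha$ for some $y\in A_j$, so we have an element $y\in A_j$ whose image $y\alpha = x$ lies in $A_i$. Applying Lemma \ref{Cases22} (with the roles of $i$ and $j$ swapped) to the element $y\in A_j$ yields the containment
\[
A_j\alpha \subseteq A_i.
\]
Now I would invoke the hypothesis $m_j\geq 3$ so that Remark \ref{RRem2} applies to $A_j$, giving $|A_j\alpha| = |A_j| = m_j+1$. Combining this with $A_j\alpha\subseteq A_i$ yields
\[
m_j + 1 \;=\; |A_j\alpha| \;\leq\; |A_i| \;=\; m_i + 1,
\]
that is, $m_j \leq m_i$. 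This contradicts the hypothesis $m_i < m_j$, so no such $x$ exists, i.e.\ $x\notin A_j\alpha$ for every $x\in A_i$.

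There is really no substantive obstacle here: the proof is a two-line contradiction once the previous lemmas are in hand. The only point that needs care is to make sure the invocation of Lemma \ref{Cases22} is in the correct direction (we need that if \emph{some} element of $A_j$ lands in $A_i$, then \emph{all} of $A_j$ lands in $A_i$), and that the hypothesis $m_j\geq 3$ is used precisely to obtain the size identity $|A_j\alpha|=|A_j|$ via Remark \ref{RRem2}; no such identity is available for $A_i$, which is why the hypothesis is stated asymmetrically.
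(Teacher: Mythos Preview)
Your proof is correct and follows essentially the same route as the paper: assume some $x\in A_i$ lies in $A_j\alpha$, use Lemma~\ref{Cases22} to get $A_j\alpha\subseteq A_i$, and then combine the size bound $|A_j\alpha|\leq |A_i|=m_i+1$ with Remark~\ref{RRem2} (which requires $m_j\geq 3$) to contradict $m_i<m_j$. The only cosmetic difference is that the paper phrases the contradiction as $|A_j\alpha|<|A_j|$ contradicting Remark~\ref{RRem2}, whereas you phrase it as $m_j\leq m_i$ contradicting the hypothesis; these are equivalent.
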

\begin{proof} Let $m_i < m_j$. Assume that $x\in  A_j\alpha$ for some $x\in A_i$. Then by Lemma \ref{Cases22}, we get $A_j\alpha\subseteq A_i$. It follows that $|A_j\alpha| \leq |A_i| = m_i + 1 < m_j+1 = |A_j|$. That is,  $|A_j\alpha| < |A_j|$, which contradicts Remark \ref{RRem2}. Hence $x\notin  A_j\alpha$ for all $x\in A_i$.
\end{proof}

\begin{lemma}\label{CCCases}  Let $\alpha\in T_n^*(l)$ and $i\in\{1,2,\dots,l\}$ such that $m_i\geq 3$. Let $Z_{m_i} = \{j\in \{1,2,\dots,l\} \colon m_j=m_i\}$. Then $A_j\alpha = A_{j\sigma}$ for all $j\in Z_{m_i}$ and for some $\sigma\in G(Z_{m_i})$, the permutation group on $Z_{m_i}$.
\end{lemma}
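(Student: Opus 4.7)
The plan is to combine Lemma \ref{Cases22}, Lemma \ref{Cases222}, and the disjointness observation in Remark \ref{RRRem5} with a counting argument based on Remark \ref{RRem2}, in order to pin down exactly which $A_k$ each $A_j\alpha$ (for $j\in Z_{m_i}$) coincides with.

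First, I would note that since $m_i\geq 3$, every $j\in Z_{m_i}$ satisfies $m_j=m_i\geq 3$ and every index $j'$ with $m_{j'}>m_i$ satisfies $m_{j'}\geq 4\geq 3$, so Remark \ref{RRem2} yields $|A_j\alpha|=|A_j|=m_j+1$ in both cases. By Lemma \ref{Cases22}, for each $j\in Z_{m_i}$ there exists a single $k\in\{1,\dots,l\}$ with $A_j\alpha\subseteq A_k$, and Lemma \ref{Cases222} (with the roles of $i,j$ swapped) rules out $m_k<m_j=m_i$. Hence $A_j\alpha$ lies inside $B:=\bigcup_{t:\,m_t\geq m_i}A_t$.

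Next I would carry out the counting for the ``strictly larger'' layer. Set $B':=\bigcup_{t:\,m_t>m_i}A_t$. For every $j'$ with $m_{j'}>m_i$, the same argument places $A_{j'}\alpha$ inside $\bigcup_{k:\,m_k\geq m_{j'}}A_k\subseteq B'$. By Remark \ref{RRRem5} the sets $\{A_{j'}\alpha : m_{j'}>m_i\}$ are pairwise disjoint, and their total cardinality equals $\sum_{j':\,m_{j'}>m_i}(m_{j'}+1)=|B'|$. A disjoint union of the correct cardinality inside the finite set $B'$ must exhaust it, so $\bigcup_{j':\,m_{j'}>m_i}A_{j'}\alpha = B'$.

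Finally, for $j\in Z_{m_i}$, Remark \ref{RRRem5} gives $A_j\alpha\cap A_{j'}\alpha=\emptyset$ for all $j'\neq j$, so in particular $A_j\alpha$ is disjoint from $B'$. Combined with $A_j\alpha\subseteq B$, this forces $A_j\alpha\subseteq B\setminus B'=\bigcup_{k\in Z_{m_i}}A_k$. Since $A_j\alpha$ is contained in a single $A_k$ by Lemma \ref{Cases22} and the $A_k$'s are pairwise disjoint, that $k$ must lie in $Z_{m_i}$; then $|A_j\alpha|=m_i+1=|A_k|$ forces $A_j\alpha=A_k$. Defining $j\sigma:=k$ yields a map $\sigma:Z_{m_i}\to Z_{m_i}$, which is injective because distinct $j$'s give disjoint (hence distinct) images $A_{j\sigma}$, and is therefore a permutation of the finite set $Z_{m_i}$. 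The main point to watch will be keeping the disjointness and cardinality bookkeeping coherent across the two ``layers'' $Z_{m_i}$ and $\{j':m_{j'}>m_i\}$, but Remarks \ref{RRRem5} and \ref{RRem2} make this routine once organized.
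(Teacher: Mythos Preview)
Your argument is correct. The key ingredients you invoke (Lemma~\ref{Cases22} to place each $A_j\alpha$ inside a single $A_k$, Lemma~\ref{Cases222} to force $m_k\ge m_j$, Remark~\ref{RRem2} for $|A_j\alpha|=|A_j|$ when $m_j\ge3$, and Remark~\ref{RRRem5} for pairwise disjointness of the images) are exactly what is needed, and your counting step on $B'$ is sound.

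The route, however, differs from the paper's. The paper proceeds by strong induction on the layers $Z_{m_1},Z_{m_1-1},\dots$, starting from the top: at each stage it uses the inductive hypothesis to argue that any element of a higher layer is already in the image of some $A_u$ from that higher layer, and then Lemma~\ref{Cases2} forbids it from also lying in $A_{q}\alpha$ for $q$ in the current layer. You instead compress all higher layers into the single set $B'=\bigcup_{m_t>m_i}A_t$ and dispose of them in one stroke by a cardinality argument: the images $A_{j'}\alpha$ with $m_{j'}>m_i$ are pairwise disjoint subsets of $B'$ whose sizes add up to $|B'|$, so they exhaust $B'$, and hence $A_j\alpha$ for $j\in Z_{m_i}$ must miss $B'$ entirely. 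This is shorter and more transparent than the paper's induction, at the cost of being slightly less explicit about what happens on each individual higher layer; since the lemma only asks about $Z_{m_i}$, that extra information is not needed here.
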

\begin{proof} Let $d$ be an integer such that $d\geq 0$ and $m_1-d\geq 3$. Consider the proposition
	\begin{center}
		$P(d)\colon  A_j\alpha = A_{j\sigma} \text{ for all } j\in Z_{m_1-d} \text{ and for some } \sigma\in G(Z_{m_1-d})$.
	\end{center}	
	We shall prove this by a strong induction on $d$.

	First, we prove that $P(0)$ holds. That is,  $A_j\alpha = A_{j\sigma}$ for all $j\in Z_{m_1}$ and for some $\sigma\in G(Z_{m_1})$.  Note that $m_1\geq m_k$ for all $1\leq k\leq l$. Suppose that $Z_{m_1} =\{j_1,j_2,\dots,j_r\}$ such that $j_1\neq j_2\neq \cdots \neq j_r$. It is obvious that $1\in Z_{m_1}$. For convenience, we assume that $j_1 = 1$. That is, $j_2,j_3,\dots,j_r\in \{2,3,\dots,l\}$ where $m_{j_2} = m_{j_2} = \cdots = m_{j_r} = m_1\geq 3$. 	It follows from Lemma \ref{case2} and Lemma \ref{Cases222} that $A_{j_1}\alpha = A_1\alpha = A_{k_1}$ for some $k_1\in Z_{m_1}$. Similarly, we have $A_{j_2}\alpha = A_{k_2}$ for some $k_2\in Z_{m_1}$. If $k_2 = k_1$, we obtain $A_{1}\alpha = A_{j_2}\alpha$, that is, there exist $x\in A_1, y\in A_{j_2}$ such that $x\alpha = y\alpha$. Then by Lemma \ref{Cases2}, we get $j_2 = 1 = j_1$, a contradiction. Hence $k_2\neq k_1$. We also can show that $A_{j_3}\alpha = A_{k_3}$ for some $k_3\in Z_{m_1}\setminus\{k_1,k_2\}$.  At the end of this process we obtain $A_{j_r}\alpha = A_{k_r}$ for some $k_r\in Z_{m_1}\setminus\{k_1,k_2,\dots,k_{r-1}\}$. Since $\{k_1,k_2,\dots,k_r\}\subseteq Z_{m_1}$ and $|Z_{m_1}| = |\{k_1,k_2,\dots,k_r\}| = r$, which implies that $Z_{m_1} = \{k_1,k_2,\dots,k_r\}$. We define $j_t\sigma = k_t$ for all $1\leq t\leq r$. It is clear that $\sigma\colon Z_{m_1} \to Z_{m_1}$ is a bijection. Thus $\sigma\in G(Z_{m_1})$ and $A_{j_t}\alpha = A_{{j_t}\sigma}$ for all $1\leq t \leq r$. Therefore $P(0)$ holds.
	
	Now, assume that $P(d)$ holds for all $0\leq d\leq k-1$ where $m_1-(k-1)\geq 3$. Then $m_1 -d \geq m_1-(k-1)\geq 3$ for all $0\leq d\leq k-1$. We prove $P(k)$ holds  where $m_1-k\geq 3$. Suppose that $Z_{m_1-k} =\{q_1,q_2,\dots,q_s\}$ such that $q_1\neq q_2\neq \cdots \neq q_s$. 	Since $k>0$, which implies that $1\notin  Z_{m_1-k}$ and so  $q_1,q_2,\dots,q_s\in \{2,3,\dots,l\}$ where $m_{q_1} = m_{q_2} = \cdots = m_{q_s} = m_1-k\geq 3$. We show that if $x\in A_p$ where $m_1-k < m_p$, then $x\notin A_{q_t}\alpha$ for all $1\leq t\leq s$. Let $x\in A_p$ such that $m_1-k < m_p$. Then $m_p = m_1 - d$ for some $d\in\{0,1,\dots, k-1\}$, that is, $p \in Z_{m_1-d}$. If $x\in A_{q_t}\alpha$ for some $q_t\in Z_{m_1-k}$, then $x=y\alpha$ for some $y\in A_{q_t}$. By our induction hypothesis, we have $A_j\alpha = A_{j\sigma} \text{ for all } j\in Z_{m_1-d} \text{ where } \sigma\in G(Z_{m_1-d})$. Since $p\in Z_{m_1-d}$, there exists unique $u \in Z_{m_1-d}$ such that $u\sigma = p$. Thus we get $A_u\alpha = A_{u\sigma}  = A_p$ and so $x = z\alpha$ for some $z\in A_{u}$. Since $q_t\neq u$, we obtain $y\alpha \neq z\alpha$ by Lemma \ref{Cases2}, this is a contradiction. Therefore  $x\notin A_{q_t}\alpha$ for all $1\leq t\leq s$. It follows from Lemma \ref{case2} and Lemma \ref{Cases222} that for each $q_t\in Z_{m_1-k}$, $A_{q_t}\alpha = A_{q'_t}$ for some $q'_t\in Z_{m_1-k}$. If $q'_t = q'_j$ where $q_t, q_j\in Z_{m_1-k}$, we obtain $A_{q_t}\alpha = A_{q'_t} = A_{q'_j} = A_{q_j}\alpha$. Thus there exist $v\in A_{q_t}, w\in A_{q_j}$ such that $v\alpha = w\alpha$, we get $q_t = q_j$ by Lemma \ref{Cases2}. This implies that $Z_{m_1-k} =\{q'_1,q'_2,\dots,q'_s\}$. We define $\delta\colon Z_{m_1-k} \to Z_{m_1-k}$ by $q_t\delta = q'_t$ for all $1\leq t \leq s$. It is clear that  $\delta\in G(Z_{m_1-k})$ and $A_{q_t}\alpha = A_{q'_t} = A_{q_t\delta}$ for all $1\leq t \leq s$. So $P(k)$ holds.
\end{proof}

\begin{remark}\label{Rmark33}  Let  $\alpha\in T_n^*(l)$ and $i,j\in\{1,2,\dots,l\}$ such that $m_j\geq 3$. If $m_i < m_j$, then $x\notin  A_i\alpha$ for all $x\in A_j$. Indeed, if there is $x\in  A_i\alpha$ for some $x\in A_j$ where $m_i < m_j$, then $x = y\alpha$ for some $y\in A_i$. Since $m_j\geq 3$, $A_j = A_k\alpha$ for some $m_k = m_j\geq 3$ by Lemma \ref{CCCases}. Thus $x = z\alpha$ for some $z\in A_k$. Now, we have $y\alpha = z\alpha$ where $y\in A_i$ and $z\in A_k$ such that $m_i < m_k$. It follows from Lemma \ref{Cases2} that $i = k$, which is a contradiction.
\end{remark}

\begin{lemma}\label{Cases*2}  Let $\alpha\in T_n^*(l)$ and $X_n\alpha \neq X_n$. If $u\in  X_n\setminus X_n\alpha$, then  $(u = j\in\{1,2,\dots,l\}$ such that $m_j = 2$ and $j+2l\in X_n\alpha)$ or $(u = j+2l$ where $j\in\{1,2,\dots,l\}$ such that $m_j = 2$ and $j\in X_n\alpha)$. In particular, $j+l\in X_n\alpha$ for all $j\in\{1,2,\dots,l\}$ such that $m_j =2$.
\end{lemma}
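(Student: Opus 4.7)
The plan is to analyze how $\alpha$ acts on the block decomposition $X_n=\bigsqcup_{i=1}^l A_i$ and reduce the lemma to a short combinatorial check on blocks of size at most three. First I would split $\{1,\dots,l\}$ into $V^{\star}=\{j:m_j\geq 3\}$ and $T=\{j:m_j\leq 2\}$, and set $V=\bigcup_{j\in V^{\star}}A_j$, $U=\bigcup_{j\in T}A_j$. By Lemma \ref{CCCases} the blocks of common $m$-value $\geq 3$ are permuted among themselves, so $V\alpha=V$; Remark \ref{Rmark33} further shows that no element of $V$ lies in any $A_i\alpha$ with $i\in T$, so $\alpha$ actually restricts to a map $U\to U$. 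Consequently $X_n\setminus X_n\alpha = U\setminus U\alpha$, and every missing element lives in some $A_i$ with $m_i\leq 2$.

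Second, for each $j\in T$ let $f(j)$ denote the unique index with $A_j\alpha\subseteq A_{f(j)}$ furnished by Lemma \ref{Cases22}; Remark \ref{Rmark33} forces $f(j)\in T$. A direct check gives $|A_j\alpha|\geq 2$ for $j\in T$, since $y$ and $y+l$ in $A_j$ have images differing by $l\neq 0$. Combined with Lemma \ref{Cases2} (images of distinct $A_j$'s are disjoint) and the bound $|A_i|\leq 3$ for $i\in T$, this makes $f|_T$ injective: two indices mapping to the same $i$ would contribute disjoint image subsets of total size $\geq 4>3\geq|A_i|$. Injectivity on a finite set upgrades to a bijection $f:T\to T$, and hence $A_i\cap X_n\alpha = A_{f^{-1}(i)}\alpha$ for every $i\in T$.

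Third, I would fix $u\in X_n\setminus X_n\alpha$ and choose $i\in T$ with $u\in A_i$. If $m_i=1$, then $|A_i|=2=|A_{f^{-1}(i)}\alpha|$ forces $A_i\subseteq X_n\alpha$, a contradiction. Hence $m_i=2$ and $A_{f^{-1}(i)}\alpha$ is a proper subset of $A_i=\{i,i+l,i+2l\}$. Since any image of a block from $T$ is an arithmetic progression of step $l$ (immediate from $|(k+tl)\alpha-(k+(t+1)l)\alpha|=l$ applied to consecutive terms), the set $A_{f^{-1}(i)}\alpha$ must be $\{i,i+l\}$ or $\{i+l,i+2l\}$, yielding $u=i+2l$ with $i\in X_n\alpha$, or $u=i$ with $i+2l\in X_n\alpha$; this matches the two alternatives of the statement with $j=i$. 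In either subcase $i+l\in X_n\alpha$, and trivially when $A_j$ is fully covered, so $j+l\in X_n\alpha$ holds for every $j\in\{1,\dots,l\}$ with $m_j=2$, giving the ``in particular'' clause.

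The main obstacle is the injectivity/bijectivity step for $f|_T$, which simultaneously invokes Lemma \ref{Cases2}, the lower bound $|A_j\alpha|\geq 2$, and the upper bound $|A_i|\leq 3$ for $i\in T$; once this pigeonhole-style bijection is in hand, the rest reduces to an elementary case analysis on blocks of length $2$ or $3$.
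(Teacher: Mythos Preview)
Your argument is correct and, in fact, cleaner than the paper's own proof. Both proofs start the same way, using Lemma~\ref{CCCases} (and its consequences) to force any missing element into a block $A_i$ with $m_i\le 2$. From there the paper proceeds by an explicit case split: either every $m=1$ block maps into an $m=1$ block (Case~1, where it then locates an $m=2$ block $A_{j_p}$ whose image folds back on itself and reads off the four possible patterns), or some $m=1$ block $A_{i_t}$ maps into an $m=2$ block $A_{j_k}$ (Case~2, with four subcases $(i_t\alpha,(i_t+l)\alpha)\in\{(j_k,j_k+l),\dots\}$), in each subcase arguing by contradiction that the relevant endpoint of $A_{j_k}$ cannot lie in $X_n\alpha$.

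Your route replaces all of that with a single pigeonhole step: the map $f\colon T\to T$ sending $j$ to the unique block index containing $A_j\alpha$ is injective because two disjoint images of size $\ge 2$ cannot fit inside a set of size $\le 3$, hence bijective, so $A_i\cap X_n\alpha=A_{f^{-1}(i)}\alpha$ for every $i\in T$. This immediately rules out $m_i=1$ and, for $m_i=2$, the observation that the image of any $T$-block is a run of consecutive terms of step~$l$ pins $A_{f^{-1}(i)}\alpha$ down to $\{i,i+l\}$ or $\{i+l,i+2l\}$. The bijection also gives for free that at most one element of each $A_i$ is missing, which the paper never states explicitly. What your approach buys is uniformity: you never need to distinguish whether the preimage block has $m=1$ or $m=2$, nor enumerate the eight patterns the paper walks through. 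What the paper's approach buys is concreteness: it exhibits, in each subcase, the actual witness explaining why the endpoint is absent.
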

\begin{proof} Assume that $u\in X_n\setminus X_n\alpha$. For $i\in\{1,2,\dots,l\}$ with $m_i\geq 3$, by Lemma \ref{CCCases} we have $A_i = A_j\alpha\subseteq X_n\alpha$ for some $j\in\{1,2,\dots,l\}$ such that $m_j = m_i$. Hence $u\notin A_i$ for all $i\in\{1,2,\dots,l\}$ such that $m_i\geq 3$. That is, $u\in A_i$ for some $i\in\{1,2,\dots,l\}$ such that $m_i\in\{1,2\}$. Let  $i_1,i_2,\dots,i_r\in\{1,2,\dots,l\}$ such that $m_{i_1}= m_{i_2}= \cdots = m_{i_r} =1$ and $j_1,j_2,\dots,j_s\in\{1,2,\dots,l\}$ such that $m_{j_1} = m_{j_2}= \cdots = m_{j_s} = 2$. 	Note that $i_t+ql\notin X_n$ for all $1\leq t\leq r$ and all $q\geq 2$. There are two possible cases to consider.
	
	\noindent\textbf{Case 1.} $\{i_1, i_2\dots,i_r\} = \emptyset$ or [$\{i_1,i_2,\dots,i_r\} \neq \emptyset$ and $(i_t\alpha, (i_t+l)\alpha)\in  \{(i_k, i_k+l), \break (i_k+l, i_k)\colon k\in\{1,2,\dots,r\}\}$ for all $1\leq t\leq r$]. Since $X_n\alpha\neq X_n$, by Lemma \ref{Cases22} and Remark \ref{RRRem5} there exists $j_p\in\{j_1,j_2,\dots,j_s\}$ such that 
	$(j_p\alpha, (j_p+l)\alpha, (j_p+2l)\alpha) \in\{(j_q, j_q+l, j_q), (j_q+l, j_q, j_q+l), (j_q+l, j_q+2l,   j_q+l), (j_q+2l, j_q+l, j_q+2l)\colon q\in\{1,2,\dots,s\}\}$. 
	
	If 	$(j_p\alpha, (j_p+l)\alpha, (j_p+2l)\alpha) \in\{(j_q, j_q+l, j_q), (j_q+l, j_q, j_q+l)\colon q\in\{1,2,\dots,s\}\}$, it is easy to see that $j_q+2l\notin X_n\alpha$ where $j_q+l\in X_n\alpha$ and $m_{j_q} = 2$. In this case, we have $u = j_q+2l$ such that $m_{j_q} = 2$ and $j_q\in X_n\alpha$.
	
	If 	$(j_p\alpha, (j_p+l)\alpha, (j_p+2l)\alpha) \in\{(j_q+l, j_q+2l, j_q+l), (j_q+2l, j_q+l, j_q+2l)\colon q\in\{1,2,\dots,s\}\}$, then we get $j_q\notin X_n\alpha$ and $j_q+l\in X_n\alpha$ such that $m_{j_q} = 2$. In this case, we have $u = j_q$ such that $m_{j_q} = 2$ and $j_q+2l\in X_n\alpha$.

	\noindent\textbf{Case 2.} 	$\{i_1,i_2,\dots,i_r\} \neq \emptyset$  and $(i_t\alpha, (i_t+l)\alpha)\in\{(j_k, j_k+l), (j_k+l, j_k),  (j_k+l, j_k+2l), (j_k+2l, j_k+l)\colon k\in\{1,2,\dots,s\}\}$ for some $t\in\{1,2,\dots,r\}$. Clearly, $j_k+l\in X_n\alpha$ when $k\in\{1,2,\dots,s\}$. Now, we partition into four subcases as follows:
	
	\textbf{Subcase 2.1} $(i_t\alpha, (i_t+l)\alpha) = (j_k, j_k+l)$ for some $k\in\{1,2,\dots,s\}$. Suppose that $j_k+2l\in X_n\alpha$. That is, there exists $x\in X_n$ such that $x\alpha = j_k+2l$. Since $|(i_t+l)\alpha - x\alpha| = |(j_k+l)-(j_k+2l)| = l$, we obtain $|(i_t+l) - x| = l$. It follows that $x = i_t$ or $x = i_t+2l$. If $x = i_t$, then $j_k+2l = x\alpha =i_t\alpha = j_k$. Hence $2l=0$, this is impossible. If $x = i_t+2l$, then $x\notin X_n$, a contradiction. Therefore $j_k+2l\notin X_n\alpha$.
	
	\textbf{Subcase 2.2} $(i_t\alpha, (i_t+l)\alpha) = (j_k+l, j_k)$ for some $k\in\{1,2,\dots,s\}$. Suppose that $j_k+2l\in X_n\alpha$. That is, there exists $x\in X_n$ such that $x\alpha = j_k+2l$. Since $|i_t\alpha - x\alpha| = |(j_k+l)-(j_k+2l)| = l$, we obtain $|i_t - x| = l$. Thus $x=i_t\pm l$. If $x = i_t+l$, then $j_k+2l = x\alpha = (i_t+l)\alpha = j_k$. Hence $2l=0$, this is impossible. If $x = i_t-l$, then $x\notin X_n$ (since $1\leq i_t\leq l$), a contradiction. Therefore $j_k+2l\notin X_n\alpha$.
	
	\textbf{Subcase 2.3} $(i_t\alpha, (i_t+l)\alpha) = (j_k+l, j_k+2l)$ for some $k\in\{1,2,\dots,s\}$. Suppose that $j_k\in X_n\alpha$. That is, there exists $x\in X_n$ such that $x\alpha = j_k$. Since $|i_t\alpha - x\alpha| = |(j_k+l)-j_k| = l$, we obtain $|i_t - x| = l$. Thus $x=i_t\pm l$. Since $i_t-l\notin X_n$, we have $x = i_t+l$. It follows that $j_k = x\alpha = (i_t+l)\alpha = j_k+2l$. Hence $2l=0$, this is impossible. Therefore $j_k\notin X_n\alpha$.
	
	\textbf{Subcase 2.4} $(i_t\alpha, (i_t+l)\alpha) = (j_k+2l, j_k+l)$ for some $k\in\{1,2,\dots,s\}$. Suppose that $j_k\in X_n\alpha$. That is, there exists $x\in X_n$ such that $x\alpha = j_k$. Since $|(i_t+l)\alpha - x\alpha| = |(j_k+l)-j_k| = l$, we obtain $|(i_t+l) - x| = l$. This implies that that $x = i_t$ or $x = i_t+2l$. Since $i_t+2l\notin X_n$, we have $x = i_t$. It follows that $j_k = x\alpha =i_t\alpha = j_k+2l$ and so $2l=0$, this is impossible. Hence $j_k\notin X_n\alpha$.
	
	From Subcases 2.1 and 2.2, we have $u = j_k+2l$ such that $m_{j_k} = 2$ and $j_k\in X_n\alpha$. And from Subcases 2.3 and 2.4, we have $u = j_k$ such that $m_{j_k} = 2$ and $j_k+2l\in X_n\alpha$.
\end{proof}

\begin{theorem}\label{RRegular} If $(n\geq 3$ is an odd integer and $l < \frac{n+1}{2})$ or $(n\geq 4$ is an even integer and $l\leq \frac{n}{2})$, then $T^*_n(l)$ is a regular semigroup.
\end{theorem}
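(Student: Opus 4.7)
I aim to show every $\alpha \in T^*_n(l)$ admits a $\beta \in T^*_n(l)$ with $\alpha\beta\alpha = \alpha$. The plan is to construct $\beta$ block-by-block on the partition $X_n = \bigcup_{i=1}^{l} A_i$ introduced before Lemma \ref{Cases22}, arranging on each block $A_k$ that $\beta(x)$ is a preimage of $x$ under $\alpha$ whenever $x \in X_n\alpha$.

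For each ``long'' block $A_k$ with $m_k \geq 3$, Lemma \ref{CCCases} yields a unique $i$ with $m_i = m_k$ and $A_i\alpha = A_k$, while Lemma \ref{case2} makes $\alpha|_{A_i}$ a strictly monotonic bijection onto $A_k$. I would set $\beta|_{A_k} = (\alpha|_{A_i})^{-1}$. The resulting map is strictly monotonic on $A_k$, hence length-$l$ preserving and reflecting there, and its image is exactly $A_i$.

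For each ``short'' block $A_k$ with $m_k \in \{1, 2\}$, the construction is more delicate. First note that only short blocks $A_p$ (with $m_p \leq 2$) can map into $A_k$: if $m_p \geq 3$, Lemma \ref{CCCases} forces $A_p\alpha = A_{p\sigma}$ with $m_{p\sigma} = m_p \geq 3$, which by Lemma \ref{Cases2} is disjoint from the image of any other block, so $A_p\alpha$ cannot lie inside the smaller $A_k$. Lemma \ref{Cases*2} then guarantees $k+l \in X_n\alpha$ and that at most one of $\{k, k+2l\}$ is missing from $X_n\alpha$. I would pick a preimage $y$ of $k+l$, choosing the source block $A_p \ni y$ and the element $y$ itself so that the preimages of the non-missed endpoints of $A_k$ sit among $\{y-l, y+l\}$; this uses length-$l$ preservation of $\alpha|_{A_p}$ and, when needed, a switch to a different block mapping into $A_k$ to secure the second preimage. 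I then set $\beta(k+l) = y$, and $\beta(k), \beta(k+2l) \in \{y-l, y+l\}$, sending any missed endpoint to a remaining admissible element. Consecutive $\beta$-values on $A_k$ are then at distance $l$, while $|\beta(k) - \beta(k+2l)| \in \{0, 2l\}$ is never $l$.

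To finish, I verify the two required conditions. First, $\beta \in T^*_n(l)$: the length-$l$ biconditional holds on each $A_k$ by construction, and across distinct $A_k, A_{k'}$ the images $\beta(A_k), \beta(A_{k'})$ lie in distinct blocks (if they shared a block $A_p$, then $A_p\alpha$ would intersect both $A_k$ and $A_{k'}$, contradicting Lemma \ref{Cases22}), so cross-block distances cannot equal $l$. Second, $\alpha\beta\alpha = \alpha$ follows directly, since $(x\alpha)\beta \in \alpha^{-1}(x\alpha)$ for every $x \in X_n$. The main obstacle is the $m_k = 2$ case: a careless initial choice of $A_p$ and $y$ might leave a genuine preimage of $k$ or $k+2l$ inaccessible from $\{y-l, y+l\}$, and showing that a consistent choice always exists is exactly where Lemma \ref{Cases*2}, combined with the length-$l$ preservation structure of $\alpha$ on short blocks, becomes indispensable.
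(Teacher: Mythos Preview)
Your approach is correct and essentially the same as the paper's: both construct $\beta$ by selecting $\alpha$-preimages and handling the missing endpoints via Lemma~\ref{Cases*2}, the only difference being that you organize the construction block-by-block (invoking Lemma~\ref{CCCases} explicitly for long blocks) while the paper chooses a global system $d_x \in x\alpha^{-1}$ and verifies the biconditional for $\beta$ directly from that of $\alpha$. One harmless imprecision: the ``switch to a different block mapping into $A_k$'' you mention is never actually available or needed, since the biconditional forces all preimages of any short $A_k$ into a single block $A_p$ (if $a\alpha,b\alpha\in A_k$ with $|a\alpha-b\alpha|=l$ then $|a-b|=l$, and $A_k\cap X_n\alpha$ is connected through $k+l$) --- but the construction within that single $A_p$ always succeeds, so this only simplifies your $m_k=2$ case rather than breaking it.
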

\begin{proof} Suppose $(n\geq 3$ is odd integer and $l < \frac{n+1}{2})$ or $(n\geq 4$ is an even integer and $l\leq \frac{n}{2})$. Let $\alpha\in T^*_n(l)$. 
	
	\noindent\textbf{Case 1.} $X_n\alpha = X_n$. Then $\alpha$ is bijective. Since $T_n$ is a regular semigroup and $\alpha\in T^*_n(l)\subseteq T_n$, there exists $\beta\in T_n$ such that $\alpha = \alpha\beta\alpha$. We prove that $\beta\in T_n^*(l)$. Let $x,y\in X_n$. Then there are $u,v\in X_n$ such that $u\alpha = x$ and $v\alpha = y$. Consider
	\begin{align*}
	|x-y| = l 	&  \: \Leftrightarrow \:|u\alpha - v\alpha| = l \\
				&  \: \Leftrightarrow \: |u(\alpha\beta\alpha) - v(\alpha\beta\alpha)| = l\\
				&  \: \Leftrightarrow \: |(u\alpha\beta)\alpha - (v\alpha\beta)\alpha| = l \\
				&  \: \Leftrightarrow \:|(u\alpha)\beta - (v\alpha)\beta| = l \\
				&  \: \Leftrightarrow \: |x\beta - y\beta| = l.
	\end{align*}
		Hence $\beta\in T_n^*(l)$.

	\noindent\textbf{Case 2.} $X_n\alpha \neq X_n$.  For $x\in X_n\alpha$, we choose and fix $d_x\in x\alpha^{-1}$. Thus $d_x\alpha = x$ for all $x\in X_n\alpha$. 	For $x\notin X_n\alpha$, by Lemma \ref{Cases*2}, we have $(x = j\in\{1,2,\dots,l\}$ such that $m_j = 2$ and $j+l, j+2l\in X_n\alpha)$ or $(x = k+2l$ where $k\in\{1,2,\dots,l\}$ such that $m_k = 2$ and $k, k+l\in X_n\alpha)$.	Define $\beta\in T_n$ by
	\begin{center}
		$\beta = \begin{pmatrix}
			v   & \cdots & j   &  j+l     & j+2l  & \cdots & k & k+l & k+2l \\
			d_v & \cdots & d_{j+2l} &  d_{j+l} & d_{j+2l} & \cdots & d_k & d_{k+l} & d_k
		\end{pmatrix}
		$
	\end{center}
	where  $v\in X_n\alpha\setminus [\{j,j+l,j+2l\colon j\notin X_n\alpha  \text{ and } m_j=2\}\cup \{k,k+l,k+2l\colon k+2l\notin X_n\alpha  \text{ and } m_k=2\}]$. Let $z\in X_n$. Then $z\alpha\in X_n\alpha$ and $z(\alpha\beta\alpha) = ((z\alpha)\beta)\alpha = (d_{z\alpha})\alpha = z\alpha$. Hence $\alpha\beta\alpha= \alpha$. We prove that $\beta\in T^*_n(l)$. Let $x,y\in X_n$. If  $x,y\in X_n\alpha$, by the same proof as given for \textbf{Case 1} then is 
	\begin{center}
		$|x-y| = l \;\Leftrightarrow \; |x\beta - y\beta| = l$.
	\end{center}
	Assume that $x\notin X_n\alpha$ (or $y\notin X_n\alpha$).  Let $x = j\in\{1,2,\dots,l\}$ such that $m_j = 2$ and $j+l, j+2l\in X_n\alpha$. From the proof of Cases 1, 2 we have $(d_{j+l}, d_{j+2l})\in\{(a,a+l),  \break (a+l,a), (a+l, a+2l), (a+2l, a+l)\}\cup\{(b,b+l),  (b+l, b)\}$ for some $a,b\in\{1,2,\dots,l\}$ such that $m_a = 2$ and $m_b = 1$. Thus  $|d_{j+2l} - d_{j+l}| = l$ where $d_{j+l}, d_{j+2l}\in A_a$ or  $d_{j+l}, d_{j+2l}\in A_b$ such that $m_a = 2$ and $m_b = 1$. Then we obtain 
		\begin{align*}
		|x-y| = l 	&  \: \Rightarrow \:|j-y| = l  \\
		&  \: \Rightarrow\: y = j+l\\
				&  \: \Rightarrow \: |x\beta-y\beta| = |j\beta-(j+l)\beta|  = |d_{j+2l} - d_{j+l}| = l.
	\end{align*}
	Conversely, 
	\begin{align*}
	|x\beta-y\beta| = l 	&  \: \Rightarrow \: |j\beta-y\beta| = l  \\
	&  \: \Rightarrow\: |d_{j+2l}-y\beta| = l\\
	&  \: \Rightarrow \: y\beta = d_{j+l} \\
		&  \: \Rightarrow \: y = j+l\\
		 &  \: \Rightarrow \: |x-y| = l.	
	\end{align*}
	Let $x = k+2l$ for some $k\in\{1,2,\dots,l\}$ such that $m_k = 2$ and $k, k+l\in X_n\alpha$. From the proof of Cases 1, 2 we have $(d_{k}, d_{k+l})\in\{(a,a+l), (a+l,a), (a+l,  a+2l), \break (a+2l,  a+l)\}\cup\{(b,b+l),  (b+l, b)\}$ for some $a,b\in\{1,2,\dots,l\}$ such that $m_a = 2$ and $m_b = 1$. Hence  $|d_{k} - d_{k+l}| = l$ where $d_{k}, d_{k+l}\in A_a$ or  $d_{k}, d_{k+l}\in A_b$ such that $m_a = 2$ and $m_b = 1$. Then we have
	\begin{center}
		$|x-y| = l \;\Rightarrow \;|(k+2l)-y| = l \;\Rightarrow \;y = k+l \Rightarrow \;|x\beta-y\beta| = |d_{k} - d_{k+l}| = l$. 
	\end{center}
	On the other hand,  
	\begin{align*}
		|x\beta-y\beta| = l	&  \: \Rightarrow \: |(k+2l)\beta-y\beta| = l   \\
		&  \: \Rightarrow\: |d_{k}-y\beta| = l \\
		&  \: \Rightarrow \: y\beta = d_{k+l}\\
		&  \: \Rightarrow \: y = k+l \\
		&  \: \Rightarrow \: |x-y| = l.
	\end{align*}
	Therefore $\beta\in T^*_n(l)$.
\end{proof}

\vskip0.2cm
We end this section by showing that $T^*_n(l)$ is not the largest regular subsemigroup of $T_n(l)$. 

(a) Let $n = 5$ and $l = 3$. That is, $l \geq \frac{5+1}{2}$. We define $\alpha = \begin{pmatrix}
	1 & 2  & 3 & 4 & 5\\
	1 & 1  & 3 & 4 & 4
\end{pmatrix}$. It is obvious that $\alpha\in T_5(3)$ and $\alpha = \alpha \alpha \alpha $. Hence $\alpha$ is a regular element in $T_5(3)$ but $\alpha\notin T^*_5(3)$.

(b) Let $n = 6$ and $l = 2$. Then $l \leq \frac{6}{2}$. We define $\alpha = \begin{pmatrix}
	1 & 2  & 3 & 4 & 5 & 6\\
	1 & 1  & 3 & 3 & 5 & 5
\end{pmatrix}$. It is clear that $\alpha\in T_6(2)$ and $\alpha = \alpha \alpha \alpha $. Thus $\alpha$ is a regular element in $T_6(2)$ but $\alpha\notin T^*_6(2)$.

\section*{Acknowledgment}
We would like to thank the referees for
their comments and suggestions on the manuscript. 

\section*{Declarations}
The author declares that there is no conflicts of interest.

\end{document}